\documentclass[copyright,bookmarks,svgnames]{EPTCS/eptcs} 

\usepackage{underscore}
\usepackage{preamble/preamble}

\title{Universal Properties of Lens Proxy Pullbacks}

\author{Matthew Di Meglio
\institute{Laboratory for Foundations of Computer Science\\
School of Informatics\\
University of Edinburgh\\
Edinburgh, Scotland}
\email{m.dimeglio@ed.ac.uk}
}

\begin{document}
\maketitle

\begin{abstract}
A comprehensive account of the categorical properties of the category of small categories and asymmetric delta lenses is given in the recent works of Chollet et al.\ and Di Meglio. An important construction for proving many of these properties is Johnson and Rosebrugh’s \textit{``pullback''} of lenses, which we call the \textit{proxy pullback} of lenses. We give a new treatment of the proxy pullback in terms of \textit{compatibility}—a stronger notion of commutativity for squares of lenses. The proxy pullback is sometimes, but not always, a real pullback. Using new notions of \textit{sync-minimal} and \textit{independent} lens spans, we characterise when a lens span that forms a commuting square with a lens cospan has a comparison lens to a proxy pullback of the cospan.
\end{abstract}

\section{Introduction}

A \textit{bidirectional transformation} is a specification of when the joint state of two systems should be regarded as consistent, together with a protocol for updating each system to restore consistency in response to a change in the other~\cite{gibbons:2018:bidirectionaltransformations}. An \textit{asymmetric} bidirectional transformation is one where the state of one of the systems, called the \textit{view}, is completely determined by that of the other, called the \textit{source}.

A \textit{symmetric delta lens} is a mathematical model of a bidirectional transformation in which both of the systems involved are modelled as categories of states and transitions (deltas) rather than merely as sets of states, and the consistency restoration operations are aware of specifically which transition occurred rather than merely the state resulting from it~\cite{diskin:2011:statetodeltabxsym}. An \textit{asymmetric delta lens} is, in a similar way, a mathematical model of an asymmetric bidirectional transformation~\cite{diskin:2011:statetodeltabx}. Johnson and Rosebrugh established an equivalence between the symmetric delta lenses between two categories and the spans of asymmetric delta lenses between the categories modulo a certain equivalence relation~\cite{JohnsonRosebrugh:2015:SpansDeltaLenses}. This correspondence is important because, although we usually need the level of generality afforded by symmetric delta lenses to describe real-world bidirectional transformations, we would rather work with asymmetric delta lenses as they are easier to reason about.

Bidirectional transformations can be chained together; this is modelled mathematically by composition of symmetric delta lenses. Under the equivalence described above, composition of spans of asymmetric delta lenses is achieved not by pullbacks, which do not even always exist~\cite{DiMeglio:2021:Thesis}, but by a seemingly ad hoc pullback-like construction that Johnson and Rosebrugh called the \textit{``pullback''} (with quotation marks)~\cite{JohnsonRosebrugh:2015:SpansDeltaLenses}. As in prior work~\cite{DiMeglio:2021:CoequalisersUnderTheLens}, we adopt the name \textit{proxy pullback} from Bumpus and Kocsis~\cite{bumpus:2021:spined-categories:-generalizing-tree-width}. 

Our goal in this work is to understand in what sense the notion of lens proxy pullback is actually canonical. In category theory, canonicity is usually formalised by a universal property. Unfortunately, the obvious universal property, the one that characterises real pullbacks, does not always hold for lens proxy pullbacks. However, \(\Lens\) is not the only category with proxies for real pullbacks. Others include
\begin{itemize}[nosep]
\item the category of Polish probability spaces and measure preserving maps~\cite{simpson:2018:category-theoretic-structure-for-independence},
\item the category of smooth manifolds and smooth maps~\cite{Yassine:2020:GeneralizedSpanCategories}, and
\item the category of comonoids in a symmetric monoidal category with nice coreflexive equalisers~\cite{bohm:2019:crossed-modules-monoids-relative}.
\end{itemize}
Several frameworks for understanding such pullback-like constructions have been proposed, including Simpson's local independent products~\cite{simpson:2018:category-theoretic-structure-for-independence}, Böhm's relative pullbacks~\cite{bohm:2019:crossed-modules-monoids-relative} and Yassine's \(F\)-pullbacks~\cite{Yassine:2020:GeneralizedSpanCategories}.  In particular, Simpson and Böhm's approaches are both based on the idea that although a proxy pullback of a cospan may not be universal amongst \textit{all} spans that form a commuting square with the cospan, it should be universal amongst \textit{some class} of those spans. Inspired by this idea, we answer the main question
\begin{quoting}[font={itshape, center}]
    Amongst which lens spans is a lens proxy pullback universal?
\end{quoting}

Although the notion of lens proxy pullback is fundamentally about modelling composites of bidirectional transformations, it has recently also become an important tool for understanding the theory of lenses. In the comprehensive account of the categorical properties of the category \(\Lens\) of small categories and lenses given by Chollet et al.~\cite{Clarke:2021:CategoryLens} and Di Meglio~\cite{DiMeglio:2021:CoequalisersUnderTheLens}, the proxy pullback played the role of a real pullback in many of the proofs. With an answer to our main question, we address some of the open questions posed by Chollet et al.~\cite{Clarke:2020:InternalLensesAsFunctorsAndCofunctors} about real pullbacks in \(\Lens\) and their relationship with proxy pullbacks.

\subsection*{Outline}
In \cref{Section: Compatible Square}, we reformulate the notion of proxy pullback in terms of \textit{compatibility}—a commutativity-like property of squares of (asymmetric delta) lenses. Our new definition is better suited to category theory than the original one given by Johnson and Rosebrugh.

To answer our main question, our overall approach is to search for lens span properties that are possessed by proxy-pullback spans and that are preserved by precomposition with lenses, until we find enough of them that a lens span posessing all of these conditions is guaranteed to have a comparison lens to the proxy pullback. Only two such properties are needed: the first is compatibility with the cospan; the second is a new property of lens spans that we call \textit{independence}, which is itself defined in terms of another new property of lens spans that we call \textit{sync minimality}. We introduce these properties in \cref{Section: Sync-minimal spans}, and we prove their necessity in \cref{Section: necessity}. In \cref{Sync minimal proxy pullback terminal amongst compatible independent}, one of the main results of this paper, we see that for sync-minimal proxy-pullback spans, the possession of these two properties by a lens span is also sufficient for the existence of such a comparison lens to the proxy-pullback span.

A natural next step is then to determine whether the sync minimality of a proxy-pullback span is itself also a necessary condition for the existence of such a comparison lens. This is not in general true, however, in \cref{Sync minimal necessary for terminality}, we see that it is actually a necessary condition for the \textit{simultaneous} existence of a comparison lens to the proxy pullback from all independent lens spans that are compatible with the cospan. Stated differently, if a proxy-pullback span of a lens cospan is terminal amongst the independent spans that are compatible with the cospan, then the proxy-pullback span is necessarily sync minimal.

Combining the above results, a proxy pullback is a real pullback if and only if it is sync minimal and every lens span that forms a commuting square with the cospan is compatible with the cospan and is also independent. Although this statement completely characterises when a proxy pullback is a real pullback, it is somewhat unsatisfactory, as it is not expressed in terms of properties of the cospan that are easily checked. There is, however, a tractable characterisation for lens cospans whose apex is the terminal category. Indeed, a proxy product of two categories is a real product if and only if at least one of the two categories is a discrete category. This result, \cref{Proxy product of discrete categories}, is the pinnacle of \cref{Section: proxy pullbacks of split opfibrations,Section: proxy pullbacks of discrete opfibrations and proxy products}. Finding such a tractable characterisation for general lens cospans is ongoing work.

\begin{remark}
This paper is based on Chapter~4 of the author's Master of Research thesis~\cite{DiMeglio:2021:Thesis}.
\end{remark}

\section{Background}
\label{Section: Background}

\subsection{Notation}
\label{Section: Notation}

Application of functions (functors, etc.) is written by juxtaposing the function name with its argument, and parentheses are only used when needed. Binary operators like \(\compose\) have lower precedence than application, so an expression like \(F a \compose F b\) parses as \((F a) \compose (F b)\).

Let \(\Cat\) denote the category whose objects are small categories and whose morphisms are functors. Categories with boldface names \(\A\), \(\B\), \(\C\), etc.\ are always small. We write \(\objectSet{\C}\) for the set of objects of a small category \(\C\), and, for all \(X, Y \in \objectSet{\C}\), we write \(\homSet{\C}{X}{Y}\) for the set of morphisms of \(\C\) from \(X\) to \(Y\). For each \(X \in \objectSet{\C}\), we write \(\outSet{\C}{X}\) for the set \(\DisjointUnion_{Y \in \objectSet{\C}}\homSet{\C}{X}{Y}\) of all morphisms in \(\C\) out of \(X\). We write \(\source{f}\) and \(\target{f}\) for, respectively, the source and target of a morphism \(f\). We also write \(f \colon X \to Y\) to say that \(X, Y \in \objectSet{\C}\) and \(f \in \homSet{\C}{X}{Y}\). The composite of morphisms \(f \colon X \to Y\) and \(g \colon Y \to Z\) is denoted \(g \compose f\).

The category with a single object \(0\) and no non-identity morphisms, also known as the \textit{terminal category}, is denoted \(\terminalCat\). The category with two objects \(0\) and \(1\) and a single non-identity morphism, namely \(u \colon 0 \to 1\), also known as the \textit{interval category}, is denoted \(\intervalCat\). We will identify objects and morphisms of a small category \(\C\) with the corresponding functors \(\terminalCat \to \C\) and \(\intervalCat \to \C\) respectively.

If the square in \(\Cat\)
\begin{equation*}
    \label{Equation: Pullback square}
    \begin{tikzcd}
        \D \arrow[r, "T"]\arrow[d, "S" swap] & \B \arrow[d, "G"]\\
        \A \arrow[r, "F" swap] & \C
    \end{tikzcd}
\end{equation*}
is a pullback square and \(S' \colon \D' \to \A\) and \(T' \colon \D' \to \B\) are functors for which \(F \compose S' = G \compose T'\), then we write \(\pair{S'}{T'}\) for the functor \(\D' \to \D\) induced from \(S'\) and \(T'\) by the universal property of the pullback. By our above identification of objects with functors from~\(\terminalCat\), if \(A \in \objectSet{\A}\) and \(B \in \objectSet{\B}\) are such that \(FA = GB\), then \(\pair{A}{B}\) is the object of \(\D\) selected by the functor \(\terminalCat \to \D\) induced by the universal property of the pullback from the functors \(\terminalCat \to \A\) and \(\terminalCat \to \B\) that respectively select the objects \(A\) and \(B\).

\subsection{Cofunctors and Lenses}
\label{Section: Cofunctors and lenses}

The definition of (asymmetric delta) lens most useful to us will be as a suitable pairing of a functor and a cofunctor~\cite{AhmanUustalu:2017:TakingUpdatesSeriously}. Let us first recall the definition of a cofunctor~\cite{Aguiar:1997:InternalCategoriesAndQuantumGroups, Clarke:2020:InternalLensesAsFunctorsAndCofunctors}.

\begin{definition}
For small categories \(\A\) and \(\B\), a \textit{cofunctor} \(F \colon \A \to \B\) consists of
\begin{itemize}
    \item a function \(F \colon \objectSet{\A} \to \objectSet{\B}\), called the \textit{object function}, and
    \item functions \(\lift{F}{A} \colon \outSet{\B}{FA} \to \outSet{\A}{A}\) for all \(A \in \objectSet{\A}\), called \textit{lifting functions},
\end{itemize}
such that the equations
\[
\begin{array}{ccccc}
    F\target \lift{F}{A}b = \target b &\qquad&
    \lift{F}{A}\id{FA} = \id{A}&\qquad&
    \lift{F}{A}(b' \compose b) = \lift{F}{A'}b' \compose \lift{F}{A}b\\
    \text{(PutTgt)}&&
    \text{(\PutId{})}&&
    \text{(\PutPut{})}
\end{array}
\]
hold whenever they are defined.
\end{definition}

\begin{warning}
The notions of cofunctor and contravariant functor are distinct and unrelated.
\end{warning}

There is a category \(\Cof\) whose objects are small categories and whose morphisms are cofunctors. The composite \(G \compose F\) of cofunctors \(F \colon \A \to \B\) and \(G \colon \B \to \C\) has as its object function the composite of the object functions of \(F\) and \(G\), and has \(\lift{(G \compose F)}{A}c = \lift{F}{A}\lift{G}{FA}c\) for all \(A \in \objectSet{\A}\) and all \(c \in \outSet{\C}{GFA}\).

In the following definition of a lens, although we use the name of the lens to refer both to its get functor and its put cofunctor, the equal object function requirement ensures that there is no ambiguity.

\begin{definition}
For small categories \(\A\) and \(\B\), a \textit{lens} \(F \colon \A \to \B\) consists of 
\begin{itemize}
    \item a functor \(F \colon \A \to \B\), called the \textit{get functor}, and
    \item a cofunctor \(F \colon \A \to \B\), called the \textit{put cofunctor},
\end{itemize}
with same object functions, such that the equation
\[
F\lift{F}{A}b = b \tag{\PutGet{}}
\]
holds whenever it is defined.
\end{definition}

There is a category \(\Lens\) of small categories and lenses. There are also identity-on-objects functors
\(\forgetLensToCat \colon \Lens \to \Cat\) and \(\forgetLensToCof \colon \Lens \to \Cof\) that respectively send a lens to its get functor and put cofunctor.

\subsection{Discrete Opfibrations and Split Opfibrations}
\label{Section: Discrete opfibrations and split opfibrations}

\begin{definition}
A functor \(F \colon \A \to \B\) is a \textit{discrete opfibration} if, for each \(A \in \objectSet{\A}\) and each \(b \in \outSet{\B}{FA}\), there is a unique \(a \in \outSet{\A}{A}\) such that \(Fa = b\).
\end{definition}

\begin{definition}
A lens \(F \colon \A \to \B\) is a \textit{discrete opfibration} if the equation
\[
\lift{F}{A}Fa = a \tag{GetPut}
\]
holds for each \(A \in \objectSet{\A}\) and each \(a \in \outSet{\A}{A}\).
\end{definition}

\begin{warning}
The name \GetPut{} has, in the past, been used for what is now called \PutId{}. The reader should note that lenses in general need not satisfy \GetPut{} the way that we have defined it.
\end{warning}

If \(F \colon \A \to \B\) is a discrete opfibration, then there is a unique lens mapped by \(\forget\) to \(F\), which we sometimes also refer to as \(F\). A lens is a discrete opfibration if and only if its get functor is a discrete opfibration. Together, these results mean that we need not specify whether a discrete opfibration \(F \colon \A \to \B\) is a functor or a lens, and we can use the name \(F\) in both functor and lens contexts without ambiguity.

\begin{definition}
For a functor \(F \colon \A \to \B\), a morphism \(f \colon X \to Y\) in \(\A\) is \textit{\(F\)-opcartesian} if, for all morphisms \(f' \colon X \to Y'\) in \(\A\) and all morphisms \(v \colon FY \to FY'\) in \(\B\) such that \(Ff' = v \compose Ff\), there is a unique morphism \(u \colon Y \to Y'\) in \(\A\) such that \(f' = u \compose f\) and \(v = Fu\). For \(f\) to be \textit{weakly \(F\)-opcartesian}, the property described in the previous sentence need only hold for \(v = \id{FY}\).
\[
\begin{tikzpicture}[
    x=3.6em, y=2.0em,
    commutative diagrams/.cd,
    every diagram
]

    \begin{scope}[shift={(0,0)}]
        \node[boxed, fit={(-2.05,-1.35) (2.05,1.35)}, label={below:\(\A\)}] (Acat) {};
    
        \begin{scope}[
            commutative diagrams/.cd,
            every node,
            every cell
        ]
            \node (UL)   at (-1.25, 0.75) {\(X\)};
            \node (UR)   at ( 1.25, 0.75) {\(Y\)};
            \node (DR)   at ( 1.25, -0.75) {\(Y'\)};
        \end{scope}
        
        \begin{scope}[
            commutative diagrams/.cd,
            every arrow,
            every label
        ]
            \draw (UL) -- node {\(f\)} (UR);
            \draw (UL) -- node[swap] {\(\forall f'\)} (DR);
            \draw[dashed] (UR) -- node {\(\exists! u\)} (DR);
        \end{scope}
    \end{scope}
    
    \begin{scope}[shift={(5.1,0)}]
        \node[boxed, fit={(-2.05,-1.35) (2.05,1.35)}, label={below:\(\B\)}] (Bcat) {};
    
        \begin{scope}[
            commutative diagrams/.cd,
            every node,
            every cell
        ]
            \node (UL)   at (-1.25, 0.75) {\(FX\)};
            \node (UR)   at ( 1.25, 0.75) {\(FY\)};
            \node (DR)   at ( 1.25, -0.75) {\(FY'\)};
        \end{scope}
        
        \begin{scope}[
            commutative diagrams/.cd,
            every arrow,
            every label
        ]
            \draw (UL) -- node {\(Ff\)} (UR);
            \draw (UL) -- node[swap] {\(Ff'\)} (DR);
            \draw (UR) -- node {\(\forall v\)} (DR);
        \end{scope}
    \end{scope}
  
    \begin{scope}[
        commutative diagrams/.cd,
        every arrow,
        every label,
        shorten=0.2em,
        mapsto
    ]
        \draw (Acat) -- node[swap] {\(F\)} (Bcat);
    \end{scope}
\end{tikzpicture}
\]
\end{definition}

\begin{definition}
A lens \(F \colon \A \to \B\) is a \textit{split opfibration} if each morphism \(\lift{F}{A}b\) is \(\forgetLensToCat F\)-opcartesian.
\end{definition}

\begin{proposition}
A lens \(F \colon \A \to \B\) is a split opfibration if and only if, for all \(a \colon A \to A'\) in~\(\A\), there is a unique \(u \colon \target \lift{F}{A}Fa \to A'\) in \(\A\) such that \(a = u \compose \lift{F}{A}Fa\) and \(Fu = \id{FA'}\).
\[
\begin{tikzpicture}[
    x=3.6em, y=2.0em,
    commutative diagrams/.cd,
    every diagram
]

    \begin{scope}[shift={(0,0)}]
        \node[boxed, fit={(-2.05,-1.35) (2.05,1.35)}, label={below:\(\A\)}] (Acat) {};
    
        \begin{scope}[
            commutative diagrams/.cd,
            every node,
            every cell
        ]
            \node (UL)   at (-1.25, 0.75) {\(A\)};
            \node (UR)   at ( 1.25, 0.75) {\(\target \lift{F}{A}Fa\)};
            \node (DR)   at ( 1.25, -0.75) {\(A'\)};
        \end{scope}
        
        \begin{scope}[
            commutative diagrams/.cd,
            every arrow,
            every label
        ]
            \draw (UL) -- node {\(\lift{F}{A}Fa\)} (UR);
            \draw (UL) -- node[swap] {\(\forall a\)} (DR);
            \draw[dashed] (UR) -- node {\(\exists! u\)} (DR);
        \end{scope}
    \end{scope}
    
    \begin{scope}[shift={(5.1,0)}]
        \node[boxed, fit={(-2.05,-1.35) (2.05,1.35)}, label={below:\(\B\)}] (Bcat) {};
    
        \begin{scope}[
            commutative diagrams/.cd,
            every node,
            every cell
        ]
            \node (UL)   at (-1.25, 0.75) {\(FA\)};
            \node (UR)   at ( 1.25, 0.75) {\(FA'\)};
            \node (DR)   at ( 1.25, -0.75) {\(FA'\)};
        \end{scope}
        
        \begin{scope}[
            commutative diagrams/.cd,
            every arrow,
            every label
        ]
            \draw (UL) -- node {\(Fa\)} (UR);
            \draw (UL) -- node[swap] {\(Fa\)} (DR);
            \draw[commutative diagrams/equals] (UR) -- (DR);
        \end{scope}
    \end{scope}
  
    \begin{scope}[
        commutative diagrams/.cd,
        every arrow,
        every label,
        shorten=0.2em,
        mapsto
    ]
        \draw (Acat) -- node[swap] {\(F\)} (Bcat);
    \end{scope}
\end{tikzpicture}
\]
\end{proposition}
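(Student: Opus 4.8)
The plan is to treat the two implications separately; the forward one is a short specialisation of the definition of split opfibration, so I will keep it brief, and essentially all of the content lies in the converse. For the forward direction I would assume that $F$ is a split opfibration and take an arbitrary morphism $a \colon A \to A'$ of $\A$. By definition $\lift{F}{A}Fa$ is $\forgetLensToCat F$-opcartesian; \PutGet{} gives $F\lift{F}{A}Fa = Fa$, and (PutTgt) gives $F\target\lift{F}{A}Fa = \target Fa = FA'$. I would then instantiate the opcartesian universal property of $\lift{F}{A}Fa$ at $f' = a$ and $v = \id{FA'}$ — the required compatibility $Fa = \id{FA'} \compose F\lift{F}{A}Fa$ holds because $F\lift{F}{A}Fa = Fa \colon FA \to FA'$ — and the morphism it returns is precisely the unique $u$ demanded by the statement.

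For the converse I would assume the stated factorisation property, fix $A \in \objectSet{\A}$ and $b \in \outSet{\B}{FA}$, and prove that $f = \lift{F}{A}b$ is $\forgetLensToCat F$-opcartesian, noting $Ff = b$ by \PutGet{}. Given $f' \colon A \to Y'$ in $\A$ and $v \colon F\target f \to FY'$ in $\B$ with $Ff' = v \compose b$, the crucial step is to rewrite $\lift{F}{A}Ff' = \lift{F}{A}(v \compose b)$ using \PutPut{} as $\lift{F}{\target f}v \compose f$. Setting $g = \lift{F}{\target f}v$, so that $Fg = v$ by \PutGet{} and $\target g = \target\lift{F}{A}Ff'$, applying the hypothesis to $f'$ yields a morphism $w \colon \target g \to Y'$ with $f' = w \compose g \compose f$ and $Fw = \id{FY'}$; then $u = w \compose g$ satisfies $u \compose f = f'$ and $Fu = v$, which gives existence.

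The step I expect to be the main obstacle — the one needing an actual idea rather than bookkeeping — is uniqueness. Given a second factorisation $u' \colon \target f \to Y'$ with $u' \compose f = f'$ and $Fu' = v$, the trick I would use is to apply the hypothesis a second time, now to $u'$ itself: since $\lift{F}{\target f}Fu' = \lift{F}{\target f}v = g$, this writes $u' = s \compose g$ for a unique $s$ with $Fs = \id{FY'}$, and then $f' = u' \compose f = s \compose \lift{F}{A}Ff'$ together with the uniqueness clause of the hypothesis applied to $f'$ forces $s = w$, so $u' = u$. Beyond this reuse of the hypothesis, the only remaining work is routine tracking of sources and targets through (PutTgt) and \PutPut{}.
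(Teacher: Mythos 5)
Your proof is correct and takes essentially the same route as the paper: the paper's sketch invokes the standard fact that weakly opcartesian lifts closed under composition are opcartesian, with closure supplied by \PutPut{}, and your converse—factoring \(\lift{F}{A}Ff'\) as \(\lift{F}{\target f}v \compose f\) via \PutPut{} and applying the hypothesis twice, to \(f'\) for existence and to \(u'\) for uniqueness—is exactly an inline proof of that fact specialised to the chosen lifts of a lens. The forward direction is, as you say, just the opcartesian universal property instantiated at \(v = \id{FA'}\), so nothing is missing.
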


\begin{proof}[Proof sketch.]
Having opcartesian lifts is equivalent to having weakly opcartesian lifts that are closed under composition. The chosen lifts of a \textit{lens} are, by the \PutPut{} axiom, closed under composition.
\end{proof}

In particular, every discrete opfibration is a split opfibration.

\section{Compatible Squares and Proxy Pullbacks}
\label{Section: Compatible Square}

Compatibility is a stronger notion of commutativity for a square of lenses. In addition to requiring that the underlying square of functors and the underlying square of cofunctors commute, it also imposes conditions on certain squares formed from a mix of the underlying functors and the underlying cofunctors.



\begin{definition}
A \textit{compatible lens square} is a commuting lens square
\begin{equation}
\label{Equation: square of lenses}
\begin{tikzcd}
\D  \arrow[d, "\Gbar" swap]
    \arrow[r, "\Fbar"]&
\B  \arrow[d, "G"]\\
\A  \arrow[r, "F" swap]&
\C
\end{tikzcd}
\end{equation}
such that the compatibility equations
\begin{align*}
    \Fbar \lift{\Gbar}{D}a = \lift{G}{\Fbar D} F a
    &&
    \Gbar \lift{\Fbar}{D}b = \lift{F}{\Gbar D} G b
\end{align*}
hold whenever they are defined. We also say that \((\Gbar, \Fbar)\) is \textit{compatible with} \((F,G)\).
\end{definition}

\begin{proposition}
\label{Compatible square and discrete opfibration}
Every commuting lens square for which one leg of the cospan is a discrete opfibration is a compatible lens square.
\end{proposition}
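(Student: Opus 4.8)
The plan is to reduce to the case where the leg $G$ of the cospan is a discrete opfibration. (The case where $F$ is a discrete opfibration then follows by applying that case to the square obtained by reflecting~\cref{Equation: square of lenses} along the diagonal through $\D$ and $\C$: reflection interchanges the roles of the two legs of the cospan, interchanges $\Fbar$ with $\Gbar$, interchanges the two compatibility equations, and preserves commutativity and compatibility of the square.) So suppose $G$ is a discrete opfibration. The facts I would keep on hand are: commutativity of the underlying square of get functors, i.e.\ $F \compose \Gbar = G \compose \Fbar$; commutativity of the underlying square of put cofunctors, which by the cofunctor composition formula is exactly the identity $\lift{\Gbar}{D}\lift{F}{\Gbar D}c = \lift{\Fbar}{D}\lift{G}{\Fbar D}c$ for all $D \in \objectSet{\D}$ and $c \in \outSet{\C}{F\Gbar D}$; and the fact that, because $G$ is a discrete opfibration, it satisfies \GetPut{} and its get functor reflects equality between morphisms that share a source.

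For the first compatibility equation, I would fix $D \in \objectSet{\D}$ and $a \in \outSet{\A}{\Gbar D}$ and observe that $\Fbar\lift{\Gbar}{D}a$ and $\lift{G}{\Fbar D}Fa$ are both morphisms of $\B$ out of $\Fbar D$. Since the get functor of $G$ is a discrete opfibration, it then suffices to check that $G$ sends them to the same morphism of $\C$. For the first, commutativity of the square of get functors and then \PutGet{} for $\Gbar$ give $G\Fbar\lift{\Gbar}{D}a = F\Gbar\lift{\Gbar}{D}a = Fa$; for the second, \PutGet{} for $G$ gives $G\lift{G}{\Fbar D}Fa = Fa$. Hence the two morphisms are equal.

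For the second compatibility equation, I would fix $D \in \objectSet{\D}$ and $b \in \outSet{\B}{\Fbar D}$, instantiate the cofunctor identity above at $c = Gb$, and use \GetPut{} for $G$ to rewrite $\lift{G}{\Fbar D}Gb$ as $b$; this gives $\lift{\Gbar}{D}\lift{F}{\Gbar D}Gb = \lift{\Fbar}{D}b$. Applying the get functor $\Gbar$ to both sides and using \PutGet{} for $\Gbar$ on the left then gives $\lift{F}{\Gbar D}Gb = \Gbar\lift{\Fbar}{D}b$, the desired equation.

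The main obstacle, such as it is, is organisational rather than technical: the two compatibility equations require genuinely different arguments. The first is a statement about get functors, forced by the commuting square of functors together with the fact that a discrete opfibration reflects equality of morphisms with a common source; the second is a statement about cofunctors, needing instead the commuting square of cofunctors together with \GetPut{}, which is precisely the lens axiom that holds for discrete opfibrations but fails for general lenses. The one thing to notice is that \GetPut{} for $G$ is exactly what makes the cofunctor identity collapse to the required form; after that, every remaining step is a single application of a lens axiom.
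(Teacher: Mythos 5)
Your proof is correct and takes essentially the same route as the paper's: both arguments combine the commuting squares of get functors and put cofunctors with \PutGet{} for the span legs and the \GetPut{} property of the discrete-opfibration leg. The only differences are cosmetic—you make the symmetry reduction explicit, and for one of the two compatibility equations you phrase \GetPut{} as uniqueness of lifts along the discrete opfibration's get functor rather than applying the equation directly.
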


\begin{proof}
For all \(D \in \objectSet{\D}\), all \(a \in \outSet{\A}{\Gbar D}\) and all \(b \in \outSet{\B}{\Fbar D}\), we have
\begin{gather*}
    \Fbar \lift{\Gbar}{D}a = \Fbar \lift{\Gbar}{D} \lift{F}{\Gbar D}Fa = \Fbar \lift{(\Gbar \compose F)}{D}Fa = \Fbar \lift{(\Fbar \compose G)}{D} Fa = \Fbar \lift{\Fbar}{D}\lift{\Gbar}{\Fbar D}Fa = \lift{\Gbar}{\Fbar D}Fa,\\
    \Gbar \lift{\Fbar}{D}b = \lift{F}{\Gbar D} F \Gbar \lift{\Fbar}{D}b = \lift{F}{\Gbar D}G\Fbar\lift{\Fbar}{D}b = \lift{F}{\Gbar D}Gb.
    \qedhere
\end{gather*}
\end{proof}

\begin{remark}
\label{Commuting lens triangle is compatible lens square}
As identity lenses are discrete opfibrations, every commuting lens triangle becomes a compatible lens square by inserting an identity lens into the triangle in the appropriate place.
\end{remark}

\begin{definition} A \textit{proxy-pullback square} is a compatible lens square sent by \(\forgetLensToCat\) to a pullback square. A \textit{proxy pullback} of a lens cospan is a lens span forming a proxy-pullback square with the cospan. A \textit{proxy product} is a proxy pullback of a cospan whose apex is the terminal category.
\end{definition}

In diagrams, we will mark proxy-pullback squares with \(\mathsf{PPB}\).

Suppose that the lens square \cref{Equation: square of lenses} is mapped by \(\forgetLensToCat\) to a pullback square. By the universal property of this pullback square, the compatibility conditions for \cref{Equation: square of lenses} to be a proxy-pullback square are equivalent to the equations
\(\lift{\Gbar}{D}a = \pair{a}{\lift{G}{\Fbar D}Fa}\) and \(\lift{\Fbar}{D}b = \pair{\lift{F}{\Gbar D} Gb}{b}\). Actually, starting with a pullback in \(\Cat\) of the get functors of a lens span, these equations define lifts on the pullback projection functors; one may check that this turns these functors into lenses and that the resulting lens square is compatible.

\begin{proposition}
\label{Proxy pullback creation}
For each lens cospan, there is a unique proxy pullback of the cospan above each pullback of the get functors of the cospan.
\end{proposition}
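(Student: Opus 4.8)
The plan is to make precise the recipe sketched in the paragraph preceding the statement. Fix a lens cospan $F \colon \A \to \C$, $G \colon \B \to \C$ and a pullback $(\D, S, T)$ of the get functors in $\Cat$, with projections $S \colon \D \to \A$ and $T \colon \D \to \B$; the goal is to show there is exactly one family of lifting functions on $S$ and on $T$ that makes the resulting lens square compatible. For existence, I would simply \emph{define}
\[
\lift{S}{D}a = \pair{a}{\lift{G}{TD}Fa} \qquad\text{and}\qquad \lift{T}{D}b = \pair{\lift{F}{SD}Gb}{b}
\]
for each $D \in \objectSet{\D}$, each $a \in \outSet{\A}{SD}$, and each $b \in \outSet{\B}{TD}$, where morphisms are identified with functors out of $\intervalCat$ as in the notation section. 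The first thing to check is that these pairings are defined: for $\lift{S}{D}a$ one needs $F\compose a = G\compose\lift{G}{TD}Fa$ as functors $\intervalCat \to \C$, which on objects follows from $F(SD) = G(TD)$ together with $F\target a = \target Fa = G\target\lift{G}{TD}Fa$ by (PutTgt) for $G$, and on the non-identity morphism of $\intervalCat$ from $Fa = G\lift{G}{TD}Fa$ by \PutGet{} for $G$; the check for $\lift{T}{D}b$ is symmetric, using the cofunctor and lens axioms of $F$.

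The remaining verifications are routine and rely on the universal property of the pullback in the usual way: a morphism of $\D$ is determined by, and may be freely reassembled from, its $S$- and $T$-images, and $S$ (resp.\ $T$) sends any pairing $\pair{S'}{T'}$ to $S'$ (resp.\ $T'$). Granting this, I would check in turn that $S$ satisfies (PutTgt), \PutId{}, \PutPut{} and \PutGet{}, and symmetrically for $T$: \PutGet{} and (PutTgt) for $S$ are immediate from $S\pair{a}{b} = a$ and functoriality of $S$; \PutId{} for $S$ reduces to \PutId{} for $G$ after noting $\lift{G}{TD}\id{FSD} = \lift{G}{TD}\id{GTD} = \id{TD}$; and \PutPut{} for $S$ reduces to \PutPut{} for $G$ (and functoriality of $F$) after computing the $S$- and $T$-images of $\lift{S}{\target\lift{S}{D}a}a'\compose\lift{S}{D}a$ and recording that the target objects match, i.e.\ that $T\target\lift{S}{D}a = \target\lift{G}{TD}Fa$. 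This makes $S$ and $T$ into lenses with the given get functors. That the square is a commuting lens square then needs the underlying cofunctor square to commute, which holds because $\lift{(F\compose S)}{D}c$ and $\lift{(G\compose T)}{D}c$ both collapse, via \PutGet{} for $F$ and for $G$ respectively, to $\pair{\lift{F}{SD}c}{\lift{G}{TD}c}$. Finally, the compatibility equations $T\lift{S}{D}a = \lift{G}{TD}Fa$ and $S\lift{T}{D}b = \lift{F}{SD}Gb$ are read straight off the definitions, so the square is a proxy-pullback square over the chosen pullback.

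For uniqueness, suppose $S$ and $T$ carry lifting functions making the square a proxy-pullback square. Since the underlying functors are the pullback projections, every morphism $m$ of $\D$ equals $\pair{Sm}{Tm}$; applying this to $m = \lift{S}{D}a$ and using \PutGet{} for the first component and the compatibility equation $\Fbar\lift{\Gbar}{D}a = \lift{G}{\Fbar D}Fa$ (with $\Gbar = S$ and $\Fbar = T$) for the second, we recover $\lift{S}{D}a = \pair{a}{\lift{G}{TD}Fa}$, and symmetrically for $\lift{T}{D}b$. Hence the lens structure is forced.

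I do not expect any genuine difficulty here; the only thing to be careful about is the bookkeeping of source and target objects at each appeal to \PutPut{} or to the universal property — for instance, making sure that in $\lift{S}{\target\lift{S}{D}a}a'$ the $G$-lift is taken at the object $\target\lift{G}{TD}Fa$ and not at some other object — so that the various composites actually line up. None of the individual steps is deep.
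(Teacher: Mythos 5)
Your proof is correct and follows essentially the same route as the paper: the paper's own argument is the paragraph preceding the proposition, which defines the lifts on the pullback projections by exactly your formulas \(\lift{S}{D}a = \pair{a}{\lift{G}{TD}Fa}\) and \(\lift{T}{D}b = \pair{\lift{F}{SD}Gb}{b}\), observes (via the universal property) that compatibility forces these equations, and leaves the lens-axiom checks as routine. You have simply carried out those checks in full, with the bookkeeping of sources and targets handled correctly.
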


\begin{proposition}[{\cite[Corollary~3.15]{DiMeglio:2021:Thesis}}]
\label{Proxy pullback unique up to iso}
Proxy-pullback spans are unique up to unique span isomorphism.
\end{proposition}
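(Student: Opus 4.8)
The plan is to reduce the statement to the uniqueness of ordinary pullbacks in \(\Cat\) by way of \cref{Proxy pullback creation}, and then to upgrade the resulting comparison functor to a span isomorphism in \(\Lens\). So let \((\Gbar_1, \Fbar_1)\) and \((\Gbar_2, \Fbar_2)\) be proxy-pullback spans of a lens cospan \((F \colon \A \to \C,\ G \colon \B \to \C)\), with apexes \(\D_1\) and \(\D_2\). Applying \(\forgetLensToCat\) to their proxy-pullback squares yields two pullback squares in \(\Cat\) on the get functors of \(F\) and \(G\), so the universal property of pullbacks in \(\Cat\) supplies a unique functor \(\Phi \colon \D_1 \to \D_2\) with \(\Gbar_2 \compose \Phi = \Gbar_1\) and \(\Fbar_2 \compose \Phi = \Fbar_1\), and this \(\Phi\) is an isomorphism of categories. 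The next thing I would record is that an isomorphism of categories carries a \emph{unique} lens structure: since its get functor is invertible, the \PutGet{} axiom forces each \(\lift{\Phi}{D}\) to send \(e \in \outSet{\D_2}{\Phi D}\) to its unique \(\Phi\)-preimage, and one checks routinely that this prescription satisfies the cofunctor axioms, making \(\Phi\) an isomorphism in \(\Lens\).

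Next I would form, in \(\Lens\), the span \((\Gbar_2 \compose \Phi,\ \Fbar_2 \compose \Phi)\) obtained by precomposing \((\Gbar_2, \Fbar_2)\) with this lens \(\Phi\). Its get functors are \(\Gbar_1\) and \(\Fbar_1\), so \(\forgetLensToCat\) sends its square to the pullback square \((\Gbar_1, \Fbar_1)\); and a short calculation using \PutGet{} for \(\Phi\) — namely \(\Fbar_2 \Phi \lift{\Phi}{D} \lift{\Gbar_2}{\Phi D} a = \Fbar_2 \lift{\Gbar_2}{\Phi D} a = \lift{G}{\Fbar_2 \Phi D} F a\), and symmetrically for the other compatibility equation — shows this square is again compatible. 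Hence \((\Gbar_2 \compose \Phi,\ \Fbar_2 \compose \Phi)\) is a proxy-pullback span of \((F, G)\) lying above the pullback \((\Gbar_1, \Fbar_1)\) of get functors. By the uniqueness asserted in \cref{Proxy pullback creation}, it must equal \((\Gbar_1, \Fbar_1)\) as a lens span, which is precisely the statement that \(\Phi\) is an isomorphism of lens spans from \((\Gbar_1, \Fbar_1)\) to \((\Gbar_2, \Fbar_2)\).

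For uniqueness of the comparison, any span isomorphism \((\Gbar_1, \Fbar_1) \to (\Gbar_2, \Fbar_2)\) in \(\Lens\) is sent by \(\forgetLensToCat\) to a functor satisfying the two cone equations, hence coincides with \(\Phi\) by the universal property of the pullback in \(\Cat\); and since a lens with invertible get functor is determined by that get functor, the span isomorphism equals \(\Phi\) as a lens. I expect the only genuine computation to be the compatibility check in the second paragraph, which is routine; all of the conceptual content sits in \cref{Proxy pullback creation} together with the uniqueness of pullbacks in \(\Cat\).
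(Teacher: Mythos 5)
Your proposal is correct, and it uses exactly the machinery the paper sets up: the paper itself gives no in-text proof (it defers to Corollary~3.15 of the cited thesis), but your route—uniqueness of the comparison functor from the universal property of pullbacks in \(\Cat\), the observation that a functor isomorphism carries a unique lens structure and is invertible in \(\Lens\), and then the uniqueness clause of \cref{Proxy pullback creation} applied to the precomposed span—is the natural argument and is consistent with how the surrounding results are proved. The only streamlining available is that your compatibility computation for \((\Gbar_2 \compose \Phi, \Fbar_2 \compose \Phi)\) is literally an instance of \cref{Compatibility preserved by precomposition}, so it could be cited rather than redone.
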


\section{Sync-minimal and Independent Lens Spans}
\label{Section: Sync-minimal spans}
\label{Section: Independent spans}

As was explained in the introduction to this paper, new notions of \textit{sync minimality} and \textit{independence} of lens spans give various necessary and sufficient conditions for the existence of comparison lenses into proxy pullbacks. In this section, we merely introduce these notions, delaying the development of their theory to when it is needed later in the paper.

Johnson and Rosebrugh~\cite{JohnsonRosebrugh:2015:SpansDeltaLenses} proposed that we regard a lens span \(\A \xleftarrow{F} \C \xrightarrow{G} \B\) as a synchronisation protocol between the systems represented by the categories \(\A\) and \(\B\). From this perspective, the category~\(\C\) has the sole purpose of coordinating the propagation to \(\B\) of transitions that occur in \(\A\) and vice versa. As transitions always originate in \(\A\) or~\(\B\), there may be morphisms in \(\C\) that are never used—these are the ones that are not composites of a sequence of morphisms that are all lifts along \(F\) or \(G\). If there are no such extraneous morphisms in \(\C\), we call the lens span \textit{sync minimal}.

\begin{definition}
A lens span
\[\begin{tikzcd}
    \A
&   \C
    \arrow[l, "F" swap]
    \arrow[r, "G"]
&   \B
\end{tikzcd}\]
is \textit{sync minimal} if each morphism in \(\C\) is a composite of a sequence of morphisms
\[
\begin{tikzcd}[column sep=large]
C_1 \arrow[r, "c_1"]&
C_2 \arrow[r, "c_2"]&
C_3 \arrow[r, phantom, "\cdots"]&[-2em]
C_{n-1} \arrow[r, "c_{n-1}"]&
C_{n}
\end{tikzcd}
\]
that are all lifts along \(F\) or \(G\), that is, for each \(k\), either \(c_k = \lift{F}{C_k} F c_k\) or \(c_k = \lift{G}{C_k}Gc_k\).
\end{definition}

There are many sync-minimal lens spans, but not all proxy-pullback spans are sync minimal.

\begin{example}
\label{Example: Non sync minimal proxy pullback}
Consider the proxy-pullback square depicted in the diagram below, where the lens lifts are indicated by the colouring of the morphisms. 
\[
\begin{tikzpicture}[
    x=3.6em, y=2.4em,
    commutative diagrams/.cd,
    every diagram
]
    
    \begin{scope}[shift={(0,0)}]
        \node[boxed, fit={(-2.3,-2.1) (2.3, 2.1)}, label={left:\(\D\)}] (Dcat) {};
    
        \begin{scope}[
            commutative diagrams/.cd,
            every node,
            every cell
        ]
            \node (C)  at ( 0, 0) {\((A_1,B_1)\)};
            \node (UL) at (-1.5, 1.5) {\((A_2', B_2')\)};
            \node (UR) at ( 1.5, 1.5) {\((A_2, B_2')\)};
            \node (DL) at (-1.5,-1.5) {\((A_2', B_2)\)};
            \node (DR) at ( 1.5,-1.5) {\((A_2, B_2)\)};
        \end{scope}
        
        \begin{scope}[
            commutative diagrams/.cd,
            every arrow,
            every label
        ]
            \draw[FireBrick] (C) edge node {\((a, b)\)}  (DR);
            \draw[Green, swap] (C) -- node {\((a', b)\)}  (DL);
            \draw[MediumBlue] (C) -- node[swap] {\((a, b')\)}  (UR);
            \draw (C) -- node {\((a', b')\)}  (UL);
        \end{scope}
    \end{scope}
    
    \begin{scope}[shift={(0,-3.7)}]
        \node[boxed, fit={(-2.3,-2.1) (2.3, 0.6)}, label={left:\(\A\)}] (Acat) {};
    
        \begin{scope}[
            commutative diagrams/.cd,
            every node,
            every cell
        ]
            \node (C) at ( 0  , 0  ) {\(A_1\)};
            \node (L) at (-1.5,-1.5) {\(A_2'\)};
            \node (R) at ( 1.5,-1.5) {\(A_2\)};
        \end{scope}
        
        \begin{scope}[
            commutative diagrams/.cd,
            every arrow,
            every label
        ]
            \draw[Green] (C) -- node[swap] {\(a'\)} (L);
            \draw[FireBrick] (C) -- node {\(a\)} (R);
        \end{scope}
    \end{scope}
    
    \begin{scope}[shift={(4.1,0)}]
        \node[boxed, fit={(-0.8,-2.1) (2.3,2.1)}, label={right:\(\B\)}] (Bcat) {};
    
        \begin{scope}[
            commutative diagrams/.cd,
            every node,
            every cell
        ]
            \node (C) at ( 0  , 0  ) {\(B_1\)};
            \node (U) at ( 1.5, 1.5) {\(B_2'\)};
            \node (D) at ( 1.5,-1.5) {\(B_2\)};
        \end{scope}
        
        \begin{scope}[
            commutative diagrams/.cd,
            every arrow,
            every label
        ]
            \draw[FireBrick] (C) -- node {\(b\)} (D);
            \draw[MediumBlue] (C) -- node[swap] {\(b'\)} (U);
        \end{scope}
    \end{scope}
    
    \begin{scope}[shift={(4.1,-3.7)}]
        \node[boxed, fit={(-0.8,-2.1) (2.3,0.6)}, label={right:\(\C\)}] (Ccat) {};
    
        \begin{scope}[
            commutative diagrams/.cd,
            every node,
            every cell
        ]
            \node (L) at (0, 0) {\(C_1\)};
            \node (R) at (1.5,-1.5) {\(C_2\)};
        \end{scope}
        
        \begin{scope}[
            commutative diagrams/.cd,
            every arrow,
            every label
        ]
            \draw[FireBrick] (L) -- node {\(c\)} (R);
        \end{scope}
    \end{scope}
  
    \begin{scope}[
        commutative diagrams/.cd,
        every arrow,
        every label,
        shorten=0.2em
    ]
        \draw (Acat) -- node[swap]  {\(F\)} (Ccat);
        \draw (Bcat) -- node        {\(G\)} (Ccat);
        \draw (Dcat) -- node[swap]  {\(\Gbar\)} (Acat);
        \draw (Dcat) -- node        {\(\Fbar\)} (Bcat);
    \end{scope}
  
    \draw (Dcat) edge[commutative diagrams/.cd, phantom] node[font=\scriptsize] {\(\PPB\)} (Ccat);
\end{tikzpicture}
\]
The lens span \((\Gbar, \Fbar)\) is not sync minimal as the morphism \((a',b')\) is not a composite of lifts. Notice that removing \((a',b')\) from \(\D\) would make the span \((\Gbar,\Fbar)\) sync minimal.
\end{example}

Starting with a lens span \(\A \xleftarrow{F} \C \xrightarrow{G} \B\), by removing all morphisms in \(\C\) that are not composites of a sequence of morphisms that are lifts along \(F\) or \(G\), we obtain a sync-minimal lens span from \(\A\) to \(\B\) that encodes the same synchronisation protocol as \((F,G)\). We call this sync-minimal lens span the \textit{sync-minimal core} of \((F,G)\) and denote it by \(\sync (F,G)\). Let \(\syncCounit{(F,G)}\) denote the inclusion functor from the apex of \(\sync (F,G)\) to \(\C\).

We are now ready to define the notion of \textit{independence} for lens spans. It is similar to a jointly-monic condition, except only with respect to morphisms in the apex of the sync-minimal core of the span with the same source object. Defining independence with respect to the sync-minimal core is necessary for independence to be preserved by precomposition with lenses.

\begin{definition}
A lens span \(\A \xleftarrow{F} \C \xrightarrow{G} \B\) is called \textit{independent} if, for all morphisms \(c\) and \(c'\) in the apex of \(\sync(F,G)\) with the same source, whenever \(Fc = Fc'\) and \(Gc = Gc'\), also \(c = c'\).
\end{definition}

\begin{remark}
Simpson~\cite{simpson:2018:category-theoretic-structure-for-independence} defines the notion of \textit{independent product} with respect to a chosen \textit{independence structure}—a multicategory of multispans, called \textit{independent} multispans, that satisfies certain additional properties. This is where our terminology for independent lens spans originates. We will have more to say about Simpson's independent products and local independent products at the end of this paper.
\end{remark}

The lens span \((\Gbar, \Fbar)\) in \cref{Example: Non sync minimal proxy pullback} is independent.

\section{Necessity of Compatibility and Independence}
\label{Section: necessity}

Proxy-pullback spans of a lens cospan are, by definition, compatible with the cospan. In this section, we will show that proxy-pullback spans are also independent, and that compatibility and independence of lens spans are preserved by precomposition with lenses. It follows that whenever a lens span that commutes with a lens cospan has a comparison lens to the proxy pullback of the cospan, the span is necessarily independent and compatible with the cospan.

\begin{proposition}
\label{Proxy pullback is independent}
All proxy-pullback spans are independent.
\end{proposition}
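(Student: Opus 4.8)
The plan is to reduce independence of the proxy-pullback span $(\Gbar, \Fbar)$ to the joint monicity of the projection functors of a pullback square in $\Cat$. The lens structure plays no role here, since independence only constrains the get functors; all that is used is that $\forgetLensToCat$ sends the square to a pullback.

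First I would recall the explicit description of a pullback of the get functors $F$ and $G$: up to isomorphism, it is the category $\D$ whose objects are the pairs $(A,B)$ with $FA = GB$ and whose morphisms $(A,B) \to (A',B')$ are the pairs $(a,b)$ with $a \colon A \to A'$ in $\A$, $b \colon B \to B'$ in $\B$ and $Fa = Gb$, the functors $\Gbar$ and $\Fbar$ being the two projections. A morphism of $\D$ is therefore determined by its images under $\Gbar$ and $\Fbar$; that is, if $\Gbar d = \Gbar d'$ and $\Fbar d = \Fbar d'$ for morphisms $d, d'$ of $\D$, then $d = d'$. For an arbitrary pullback square rather than this canonical one, the same conclusion follows by transporting this joint monicity along the span isomorphism to the canonical pullback (which exists by \cref{Proxy pullback unique up to iso}, or just by the universal property).

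Then, given morphisms $c$ and $c'$ in the apex of $\sync(\Gbar, \Fbar)$ — which I identify with their images under the inclusion $\syncCounit{(\Gbar, \Fbar)}$ into $\D$ — satisfying $\Gbar c = \Gbar c'$ and $\Fbar c = \Fbar c'$, the joint monicity just noted forces $c = c'$ as morphisms of $\D$, hence as morphisms of the apex of $\sync(\Gbar, \Fbar)$ since the inclusion is injective on morphisms. This establishes independence; in fact it establishes slightly more, as the hypothesis that $c$ and $c'$ share a source is never used. There is no real obstacle in this argument: the only point worth spelling out is that defining independence relative to the sync-minimal core does not weaken the conclusion, since joint monicity of $(\Gbar, \Fbar)$ on all of $\D$ restricts to the smaller apex of $\sync(\Gbar, \Fbar)$.
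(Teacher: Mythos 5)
Your argument is correct and is essentially the paper's own proof: both reduce independence to the joint monicity of the underlying pullback projections in \(\Cat\) (the paper invokes the universal property directly where you unpack the canonical pullback and transport along an isomorphism) and then observe that this restricts to morphisms in the apex of \(\sync(\Gbar,\Fbar)\).
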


\begin{proof}
Let \(\A \xleftarrow{\Gbar} \D \xrightarrow{\Fbar} \B\) be a proxy pullback of some lens cospan. For all \(D \in \objectSet{\D}\), and all \(d,d' \in \outSet{\D}{D}\), if \(\Fbar d = \Fbar d'\) and \(\Gbar d = \Gbar d'\) then \(d = d'\) by the universal property of the pullback in \(\Cat\) underlying the proxy pullback. In particular, this holds for those objects and morphisms in the apex of \(\sync{(\Gbar,\Fbar)}\).
\end{proof}

\begin{proposition}
\label{Compatibility preserved by precomposition}
Consider the following diagram in \(\Lens\), where \(K_1 = K_2 \compose H\) and \(J_1 = J_2 \compose H\).
\[\begin{tikzcd}[row sep=scriptsize]
&\E_1 \arrow[d, "H"]\arrow[ddl, "K_1" swap, out=-150, in=90]\arrow[ddr, "J_1", out=-30, in=90]&\\
&\E_2\arrow[dl, "K_2"{swap, near start}] \arrow[dr, "J_2" near start]&\\
\A \arrow[dr, "F" swap] && \B \arrow[dl, "G"]\\
&\C&
\end{tikzcd}\]
If the span \((K_2,J_2)\) is compatible with the cospan \((F,G)\), then so is the span \((K_1,J_1)\).
\end{proposition}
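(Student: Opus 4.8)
The plan is to check directly that the outer square formed by the span $(K_1,J_1)$ and the cospan $(F,G)$ satisfies the definition of a compatible lens square. Commutativity of this square is immediate from associativity of composition in $\Lens$ together with the commutativity $F \compose K_2 = G \compose J_2$ of the square for $(K_2,J_2)$: indeed $F \compose K_1 = F \compose K_2 \compose H = G \compose J_2 \compose H = G \compose J_1$, and this equation of lenses restricts to the corresponding equations of get functors and of put cofunctors. It then remains to verify the two compatibility equations.

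The key step is to unfold the lifting functions of the composite cofunctors $K_1$ and $J_1$. For each $E \in \objectSet{\E_1}$ and each $a \in \outSet{\A}{K_1 E}$, the composition rule for cofunctors gives $\lift{K_1}{E} a = \lift{H}{E} \lift{K_2}{HE} a$, and similarly $\lift{J_1}{E} b = \lift{H}{E} \lift{J_2}{HE} b$ for each $b \in \outSet{\B}{J_1 E}$; here we use that object functions compose, so that $K_1 E = K_2 (HE)$ and $J_1 E = J_2 (HE)$. This is the point at which $H$ being a \emph{lens}, rather than merely a functor and a cofunctor, gets used: since $J_1$ sends a morphism $m$ of $\E_1$ to $J_2 (Hm)$, and since the \PutGet{} axiom for $H$ gives $H \lift{H}{E} \lift{K_2}{HE} a = \lift{K_2}{HE} a$, we obtain
\[
J_1 \lift{K_1}{E} a = J_2 H \lift{H}{E} \lift{K_2}{HE} a = J_2 \lift{K_2}{HE} a = \lift{G}{J_2 (HE)} F a = \lift{G}{J_1 E} F a,
\]
where the third equality is the first compatibility equation for $(K_2,J_2)$. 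The second compatibility equation for $(K_1,J_1)$ follows by the symmetric computation, using $K_1 m = K_2 (Hm)$ and again the \PutGet{} axiom for $H$:
\[
K_1 \lift{J_1}{E} b = K_2 H \lift{H}{E} \lift{J_2}{HE} b = K_2 \lift{J_2}{HE} b = \lift{F}{K_2 (HE)} G b = \lift{F}{K_1 E} G b.
\]

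There is no serious obstacle here; the proof is a short computation. The only things to watch are bookkeeping details: distinguishing the action of $J_1$ (resp.\ $K_1$) as a get functor from its presentation as the composite $J_2 \compose H$ (resp.\ $K_2 \compose H$), keeping the index objects of the lifting functions straight via $K_1 E = K_2 (HE)$ and $J_1 E = J_2 (HE)$, and recognising that it is precisely the \PutGet{} law for the lens $H$ that collapses the doubled lift so that compatibility of $(K_2,J_2)$ can be applied.
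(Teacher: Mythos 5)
Your proposal is correct and follows essentially the same route as the paper's proof: the identical chain of equalities \(J_1\lift{K_1}{E}a = J_2H\lift{H}{E}\lift{K_2}{HE}a = J_2\lift{K_2}{HE}a = \lift{G}{J_2HE}Fa = \lift{G}{J_1E}Fa\), using the composition rule for cofunctor lifts, the \PutGet{} axiom for \(H\), and compatibility of \((K_2,J_2)\). You merely write out explicitly the symmetric second equation and the commutativity check that the paper leaves as ``similarly''.
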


\begin{proof}
Suppose that the span \((K_2,J_2)\) is compatible with the cospan \((F,G)\). Then the span \((K_1,J_1)\) forms a commuting square with the cospan \((F,G)\). One of the compatibility conditions holds because \(J_1\lift{K_1}{E}a = J_2H\lift{H}{E}\lift{K_2}{HE}a = J_2 \lift{K_2}{HE}a = \lift{G}{J_2HE}Fa = \lift{G}{J_1E}Fa\), and the other holds similarly.
\end{proof}

\begin{proposition}
\label{Independence and lens span morphisms}
Consider the following commuting diagram in \(\Lens\).
\[
\begin{tikzcd}[row sep=small]
&\C_1 \arrow[dl, "F_1" swap] \arrow[dr, "G_1"] \arrow[dd, "H"]&\\
\A&&\B\\
&\C_2 \arrow[ul, "F_2"] \arrow[ur, "G_2" swap]&
\end{tikzcd}
\]
If the span \((F_2, G_2)\) is independent, then the span \((F_1, G_1)\) is also independent.
\end{proposition}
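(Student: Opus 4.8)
The plan is to exploit that $H$ is a \emph{lens}, not merely a functor, and that $F_1 = F_2 \compose H$ and $G_1 = G_2 \compose H$ hold for the put cofunctors as well as for the get functors. Two facts about $H$ will carry the argument: first, $H$ maps the apex of $\sync(F_1,G_1)$ into the apex of $\sync(F_2,G_2)$; and second --- the crucial point --- every morphism $c$ in the apex of $\sync(F_1,G_1)$ satisfies $c = \lift{H}{\source c}(Hc)$, so that two such morphisms with a common source are equal as soon as their $H$-images agree.

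For the first fact, I would start from the cofunctor-composition identities $\lift{F_1}{C} = \lift{H}{C} \compose \lift{F_2}{HC}$ and $\lift{G_1}{C} = \lift{H}{C} \compose \lift{G_2}{HC}$ and apply \PutGet{} for the lens $H$ to conclude that $H$ sends a lift $\lift{F_1}{C}b$ along $F_1$ to the lift $\lift{F_2}{HC}b$ along $F_2$, and similarly a lift along $G_1$ to a lift along $G_2$. As the get functor of $H$ preserves composites and identities, $H$ then sends any composite of lifts along $F_1$ or $G_1$ to a composite of lifts along $F_2$ or $G_2$; that is, $H$ restricts to a functor between the apexes of the two sync-minimal cores. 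For the second fact, I would induct on the length of an expression of a morphism $c$ in the apex of $\sync(F_1,G_1)$ as a composite of lifts. The base case $c = \lift{F_1}{C}b$ is immediate from the first fact, since $\lift{H}{C}(Hc) = \lift{H}{C}\lift{F_2}{HC}b = \lift{(F_2 \compose H)}{C}b = \lift{F_1}{C}b = c$ (and symmetrically when $c$ is a lift along $G_1$); the inductive step splices two such factorisations together using \PutPut{} for the cofunctor $H$.

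Granting these facts, the proposition follows quickly. Given morphisms $c$ and $c'$ in the apex of $\sync(F_1,G_1)$ with a common source $C$ and with $F_1 c = F_1 c'$ and $G_1 c = G_1 c'$, their images $Hc$ and $Hc'$ lie in the apex of $\sync(F_2,G_2)$, share the source $HC$, and satisfy $F_2 Hc = F_1 c = F_1 c' = F_2 Hc'$ and, likewise, $G_2 Hc = G_2 Hc'$; independence of $(F_2,G_2)$ then gives $Hc = Hc'$, whence $c = \lift{H}{C}(Hc) = \lift{H}{C}(Hc') = c'$, as required. I expect the only real obstacle to be keeping the \PutPut{} bookkeeping (source and target objects) straight in the induction establishing $c = \lift{H}{\source c}(Hc)$; everything else is routine axiom-chasing. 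It is worth noting that this last point is exactly where it matters that independence is defined relative to the sync-minimal core rather than the whole of $\C_1$: the lens $H$ need not be faithful on all of $\C_1$, only on morphisms built from lifts.
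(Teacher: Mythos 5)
Your proposal is correct and follows essentially the same route as the paper's proof: push \(c\) and \(c'\) forward along \(H\) into the apex of \(\sync(F_2,G_2)\), invoke independence of \((F_2,G_2)\) to get \(Hc = Hc'\), and then recover \(c = \lift{H}{C}Hc = \lift{H}{C}Hc' = c'\) from the fact that morphisms in the apex of \(\sync(F_1,G_1)\), being composites of lifts along \(F_1 = F_2 \compose H\) and \(G_1 = G_2 \compose H\), are themselves lifts along \(H\). The paper states these two facts more tersely, while you supply the \PutGet{}/\PutPut{} induction justifying them, but the underlying argument is the same.
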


\begin{proof}
Suppose that \(c\) and \(c'\) are morphisms in the apex of \(\sync (F_1,G_1)\) with the same source object \(C\) such that \(F_1c = F_1c'\) and \(G_1c = G_1c'\). Then \(Hc\) and \(Hc'\) are morphisms in the apex of \(\sync (F_2,G_2)\) with the same source object \(HC\) such that \(F_2Hc = F_2Hc'\) and \(G_2Hc = G_2Hc'\). As \((F_2, G_2)\) is independent, actually \(Hc = Hc'\). But \(c\) and \(c'\) are both composites of lifts along \(H \compose F_2\) and \(H \compose G_2\), so they are both lifts along \(H\), and thus \(c = \lift{H}{C}Hc = \lift{H}{C}Hc' = c'\).
\end{proof}

Although compatibility and independence give necessary conditions for the existence of a comparison lens, these conditions are not sufficient ones.

\begin{example}
Consider again the proxy-pullback square in \cref{Example: Non sync minimal proxy pullback}. The sync-minimal core \(\sync (\Gbar, \Fbar)\) of \((\Gbar, \Fbar)\) is obtained by removing the morphism \((a',b')\) from \(\D\). Although the span \(\sync (\Gbar, \Fbar)\) is independent and compatible with the cospan \((F,G)\), there is no comparison \textit{lens} from it to the proxy-pullback span \((\Gbar, \Fbar)\). Assume that such a comparison lens exists. Then, as the put cofunctor of the comparison lens commutes with the put cofunctors of the legs of both spans, all of the morphisms in the apex of \(\sync (\Gbar, \Fbar)\) are necessarily lifts of the corresponding morphisms in \(\D\). The \PutGet{} axiom necessitates that the lift by such a comparison lens of the morphism \((a',b')\) into the apex of \(\sync (\Gbar, \Fbar)\) be distinct from the lifts of the other morphisms \((a',b)\), \((a,b')\) and \((a,b)\), but there is no such morphism.
\end{example}

\section{Necessity and Sufficiency of Sync Minimality}
\label{Section: semiuniversal properties}

In the previous section we saw that a lens span that commutes with a lens cospan and has a comparison lens to the proxy pullback of the cospan is necessarily independent and compatible with the cospan. It turns out that if the proxy pullback is also sync minimal, then these conditions are also sufficient.

\begin{theorem}
\label{Sync minimal proxy pullback terminal amongst compatible independent}
Consider the following commuting diagram in \(\Lens\).
\[
\begin{tikzcd}[row sep=scriptsize]
&\E \arrow[ddl, "K" swap, out=-150, in=90]
    \arrow[ddr, "J", out=-30, in=90]
    \arrow[d, dashed]&\\
&\D \arrow[dl, "\Gbar" swap]
    \arrow[dr, "\Fbar"]
    \arrow[dd, PPB]&
    \\
\A  \arrow[dr, "F" swap]&&
\B  \arrow[dl, "G"]\\
&\C&
\end{tikzcd}
\]
Suppose that the span \((K,J)\) is independent and is compatible with the cospan \((F,G)\).
If the span \((\Gbar, \Fbar)\) is sync minimal, then there is a unique lens \(\E \to \D\) such that the triangles commute.
\end{theorem}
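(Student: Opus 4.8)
The plan is to observe that the comparison lens is forced and then check that the forced candidate works. Since \((K,J)\) being compatible with \((F,G)\) includes forming a commuting lens square with it, the get functors satisfy \(F \compose K = G \compose J\), so the universal property of the pullback in \(\Cat\) underlying the proxy-pullback square yields a unique functor \(H = \pair{K}{J} \colon \E \to \D\) with \(\Gbar \compose H = K\) and \(\Fbar \compose H = J\) as functors. This \(H\) is the only possible get functor of a comparison lens, and it remains to supply a put cofunctor for it.

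For the lifting functions, I would use sync minimality of \((\Gbar,\Fbar)\): each \(d \in \outSet{\D}{HE}\) is a composite of morphisms that are lifts along \(\Gbar\) or along \(\Fbar\). I lift such a decomposition stepwise in \(\E\), replacing each \(\Gbar\)-lift \(\lift{\Gbar}{D}a\) by the \(K\)-lift \(\lift{K}{E'}a\) and each \(\Fbar\)-lift \(\lift{\Fbar}{D}b\) by the \(J\)-lift \(\lift{J}{E'}b\) at the appropriate object \(E'\), and define \(\lift{H}{E}d\) to be the composite of these lifted steps. The key computation — using the compatibility equations for \((K,J)\) with \((F,G)\), the equalities \(\Gbar \compose H = K\) and \(\Fbar \compose H = J\), and the formulas \(\lift{\Gbar}{D}a = \pair{a}{\lift{G}{\Fbar D}Fa}\) and \(\lift{\Fbar}{D}b = \pair{\lift{F}{\Gbar D}Gb}{b}\) characterising the proxy-pullback lifts — is that each lifted step \(e_k\) satisfies \(H e_k = c_k\). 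Consequently \(H(\lift{H}{E}d) = d\), which is \PutGet{} (and yields (PutTgt) upon applying \(\target\)) and also ensures that each intermediate object is the source of the next generator, so the lifting procedure is well-typed.

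The main obstacle is well-definedness of \(\lift{H}{E}d\), since a morphism of \(\D\) has many decompositions into lifts; here independence of \((K,J)\) is exactly what is needed. Given two stepwise lifts \(e\) and \(e'\) of the same \(d\), each is a composite of lifts along \(K\) or along \(J\), hence a morphism of the apex of \(\sync(K,J)\), and both have source \(E\); from \(H e = d = H e'\) we get \(K e = \Gbar d = K e'\) and \(J e = \Fbar d = J e'\), so independence of \((K,J)\) forces \(e = e'\). With well-definedness established, (\PutId{}) and (\PutPut{}) follow by concatenating decompositions, so \(H\) is a lens; and \(\lift{H}{E}\lift{\Gbar}{HE}a = \lift{K}{E}a\) and \(\lift{H}{E}\lift{\Fbar}{HE}b = \lift{J}{E}b\) hold by construction, which together with the functor equalities makes the two triangles commute in \(\Lens\). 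For uniqueness, any comparison lens has get functor \(\pair{K}{J}\) by the \(\Cat\)-pullback, and its put cofunctor is determined on \(\Gbar\)- and \(\Fbar\)-lifts by the triangle equations, hence on all of \(\outSet{\D}{HE}\) by sync minimality of \((\Gbar,\Fbar)\) and (\PutPut{}).
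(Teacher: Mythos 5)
Your proposal is correct and takes essentially the same route as the paper's proof: the get functor is forced by the underlying pullback in \(\Cat\), the lifts are defined by transporting a sync-minimal decomposition of each morphism of \(\D\) into \(K\)- and \(J\)-lifts, well-definedness across decompositions is exactly where independence of \((K,J)\) is used, and uniqueness follows from sync minimality together with \PutPut{}. The only (harmless) difference is organisational: you first establish \(He_k = c_k\) via the pullback-pairing description of the proxy-pullback lifts and then deduce \(Ke = \Gbar d\), \(Je = \Fbar d\) from the functorial triangle identities, whereas the paper derives these equalities by a direct computation from the compatibility of both squares and only verifies \PutGet{} afterwards—the same ingredients in a slightly different order.
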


\begin{proof}
Suppose that \((\Gbar,\Fbar)\) is sync minimal. Let \(L \colon \E \to \D\) be the unique comparison functor from the span \((\forgetLensToCat K, \forgetLensToCat J)\) to the pullback span \((\forgetLensToCat \Gbar, \forgetLensToCat \Fbar)\). If there is a lens structure on \(L\) that makes the triangles commute, then, for all \(E \in \objectSet{\E}\), all \(a \in \outSet{\A}{KE}\) and all \(b \in \outSet{\B}{JE}\), we necessarily have
\begin{align*}
\lift{L}{E}\lift{\Gbar}{LE}a = \lift{K}{E}{a}&& \text{and}&& \lift{L}{E}\lift{\Fbar}{LE}b = \lift{J}{E}{b};
\end{align*}
that is, the lifts by \(L\) of those morphisms of \(\D\) that are lifts by \(\Gbar\) and \(\Fbar\) are determined by the lifts by \(K\) and~\(J\). As \((\Gbar, \Fbar)\) is sync minimal, each morphism in \(\D\) is a composite of lifts by \(\Gbar\) and lifts by \(\Fbar\), and so the above equations and the \PutPut{} axiom for \(L\) determine the lifts by \(L\) of all morphisms of~\(\D\). Such a lens structure on \(L\) is thus uniquely determined if it exists. 

In order to define \(\lift{L}{E}d\) using the above equations, we need to check, for any two decompositions
\[\lift{\Fbar}{D_n}b_n \compose \lift{\Gbar}{D_{n-1}}a_{n-1} \compose \cdots \compose \lift{\Fbar}{D_1}b_1 \compose \lift{\Gbar}{LE}a_0
\qquad\text{and}\qquad
\lift{\Fbar}{D_m'}b_m' \compose \lift{\Gbar}{D_{m-1}'}a_{m-1}'\cdots \compose \lift{\Fbar}{D_1'}b_1' \compose \lift{\Gbar}{LE}a_0'\]
of \(d\), that
\[\lift{J}{E_n}b_n \compose \lift{K}{E_{n-1}}a_{n-1} \compose \cdots \compose \lift{J}{E_1}b_1 \compose \lift{K}{E}a_0 =
\lift{J}{E_m'}b_m' \compose \lift{K}{E_{m-1}'}a_{m-1}'\cdots \compose \lift{J}{E_1'}b_1' \compose \lift{K}{E}a_0'.\]
Using the compatibility of both \((K,J)\) and \((\Gbar, \Fbar)\) with \((F,G)\), we see that
\begin{align*}
    \Fbar d
    &= \Fbar \paren[\big]{\lift{\Fbar}{D_n} b_n \compose \lift{\Gbar}{D_{n-1}}a_{n-1} \compose \cdots \compose \lift{\Fbar}{D_1} b_1 \compose \lift{\Gbar}{LE} a_0}\\
    &= \Fbar \lift{\Fbar}{D_n} b_n \compose \Fbar \lift{\Gbar}{D_{n-1}}a_{n-1} \compose \cdots \compose \Fbar \lift{\Fbar}{D_1} b_1 \compose \Fbar \lift{\Gbar}{LE} F a_0\\
    &= b_n \compose \lift{G}{\Fbar D_{n-1}} F a_{n-1} \compose \cdots \compose b_1 \compose \lift{G}{\Fbar LE} F a_0\\
    &= b_n \compose J\lift{K}{E_{n-1}}a_{n-1} \compose \cdots \compose b_1 \compose J\lift{K}{E}a_0\\
    &= J\lift{J}{E_n}b_n \compose J\lift{K}{E_{n-1}}a_{n-1} \compose \cdots \compose J\lift{J}{E_1}b_1 \compose J\lift{K}{E}a_0\\
    &= J\paren[\big]{\lift{J}{E_n}b_n \compose \lift{K}{E_{n-1}}a_{n-1} \compose \cdots \compose \lift{J}{E_1}b_1 \compose \lift{K}{E}a_0}
\end{align*}
Similarly, we see that
\begin{align*}
 J \paren[\big]{\lift{J}{E_m'}b_m' \compose \lift{K}{E_{m-1}'}a_{m-1}'\cdots \compose \lift{J}{E_1'}b_1' \compose \lift{K}{E}a_0'} &= \Fbar d\\
 K \paren[\big]{\lift{J}{E_n}b_n \compose \lift{K}{E_{n-1}}a_{n-1} \compose \cdots \compose \lift{J}{E_1}b_1 \compose \lift{K}{E}a_0 } &= \Gbar d\\
 K \paren[\big]{\lift{J}{E_m'}b_m' \compose \lift{K}{E_{m-1}'}a_{m-1}'\cdots \compose \lift{J}{E_1'}b_1' \compose \lift{K}{E}a_0'} &= \Gbar d,
\end{align*}
and so
\[\lift{J}{E_n}b_n \compose \lift{K}{E_{n-1}}a_{n-1} \compose \cdots \compose \lift{J}{E_1}b_1 \compose \lift{K}{E}a_0 =
\lift{J}{E_m'}b_m' \compose \lift{K}{E_{m-1}'}a_{m-1}'\cdots \compose \lift{J}{E_1'}b_1' \compose \lift{K}{E}a_0'\]
by the independence of \((K,J)\).

We now check that \(L\), with lifts defined in this way, satisfies the lens axioms. The \PutPut{} axiom is immediate from the definition of \(L\), the \PutId{} axiom follows from that of \(K\) (or of \(J\)), and the \PutGet{} axiom holds because
\[L\lift{K}{E}a = \pair{\Gbar L \lift{K}{E}a}{\Fbar L \lift{K}{E}a} = \pair{K\lift{K}{E}a}{J\lift{K}{E}a} = \pair{a}{\lift{G}{JE}Fa} = \pair{a}{\lift{G}{\Fbar LE}Fa} =  \lift{\Gbar}{LE}{a}\]
and similarly \(L \lift{J}{E}b = \lift{\Fbar}{LE}b\) for each \(E \in \objectSet{\E}\), each \(a \in \outSet{\A}{KE}\) and each \(b \in \outSet{\B}{JE}\). The relevant triangles of lenses commute by definition.
\end{proof}

Although the sync minimality of a proxy pullback of a lens cospan is sufficient for the existence of a comparison lens to the proxy-pullback span from an independent lens span that is compatible with the lens cospan, it is not in general necessary. For example, there is always a comparison lens from any proxy-pullback span to itself, namely, the identity lens on its apex. However, sync minimality is in fact necessary for there to be such comparison lenses simultaneously from all of the independent lens spans that are compatible with the lens cospan. 

\begin{theorem}
\label{Sync minimal necessary for terminality}
Consider the proxy-pullback square in \(\Lens\) depicted below.
\[
\begin{tikzcd}
\D \arrow[r, "\Fbar"]\arrow[d, "\Gbar" swap]\arrow[dr, PPB]&
\B \arrow[d, "G"]\\
\A \arrow[r, "F" swap]&
\C
\end{tikzcd}
\]
If the proxy-pullback span \((\Gbar, \Fbar)\) is terminal amongst the independent spans that are compatible with the cospan \((F,G)\), then the proxy-pullback span \((\Gbar, \Fbar)\) is sync minimal.
\end{theorem}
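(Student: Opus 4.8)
The strategy is to feed the sync-minimal core \(\sync(\Gbar,\Fbar)\) of the proxy-pullback span back into the terminality hypothesis. The comparison lens this produces will be forced — by the joint monicity of the projections of the underlying pullback — to have the subcategory inclusion \(\syncCounit{(\Gbar,\Fbar)} \colon \sync\D \to \D\) as its get functor, and the \PutGet{} axiom will then force this inclusion to be surjective on morphisms, which is precisely sync minimality of \((\Gbar,\Fbar)\).

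The first step is to check that \(\sync(\Gbar,\Fbar)\) is an admissible test object, that is, an independent lens span that is compatible with \((F,G)\). That it is a lens span is the construction recalled in \cref{Section: Sync-minimal spans}: the get functors and lifting functions of its two legs are the restrictions of those of \(\Gbar\) and \(\Fbar\), and every such lift, being a length-one composite of lifts, stays inside \(\sync\D\). The square it forms with the cospan still commutes, because the cofunctor-composites \(\lift{(F \compose \Gbar)}{D}\) and \(\lift{(G \compose \Fbar)}{D}\) again take values in \(\sync\D\); and this square is compatible with \((F,G)\), since the compatibility equations of the proxy-pullback square \((\Gbar,\Fbar)\) simply restrict to \(\sync\D\). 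Finally, \(\sync(\Gbar,\Fbar)\) is sync minimal by construction, so \(\sync\bigl(\sync(\Gbar,\Fbar)\bigr) = \sync(\Gbar,\Fbar)\), and hence the independence of \(\sync(\Gbar,\Fbar)\) is literally the independence of \((\Gbar,\Fbar)\), which holds by \cref{Proxy pullback is independent} — indeed, the underlying functors of \((\Gbar,\Fbar)\) form a pullback, so its two legs are jointly monic.

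Now assume that \((\Gbar,\Fbar)\) is terminal amongst the independent lens spans compatible with \((F,G)\). Applied to the test object \(\sync(\Gbar,\Fbar)\), this yields a lens \(P \colon \sync\D \to \D\) whose get functor, postcomposed with \(\Gbar\) and with \(\Fbar\), agrees with \(\syncCounit{(\Gbar,\Fbar)}\) postcomposed with \(\Gbar\) and with \(\Fbar\). Since the projections of the pullback \(\D\) are jointly monic on objects and on morphisms, the get functor of \(P\) coincides with the inclusion \(\syncCounit{(\Gbar,\Fbar)}\); in particular \(P\) is the identity on objects. Hence for each \(D \in \objectSet{\D}\) and each \(d \in \outSet{\D}{D}\), the lift \(\lift{P}{D}d\) is a morphism of \(\sync\D\), and by \PutGet{} its image under the inclusion is \(P\lift{P}{D}d = d\), so \(d\) is itself a morphism of \(\sync\D\). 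As \(d\) was arbitrary, every morphism of \(\D\) lies in \(\sync\D\), so \(\sync\D = \D\) and \((\Gbar,\Fbar)\) is sync minimal.

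The only delicate part of the argument is the verification in the second step that the sync-minimal core of a proxy-pullback span is again a lens span compatible with the cospan, as this is exactly what licenses its use as a comparison source in the terminality hypothesis; once this is granted, the remainder is a short joint-monicity argument followed by one application of \PutGet{}.
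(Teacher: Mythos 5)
Your proof is correct and follows essentially the same route as the paper: apply the terminality hypothesis to the sync-minimal core, identify the comparison lens's get functor with the inclusion \(\syncCounit{(\Gbar,\Fbar)}\) via the universal property (joint monicity of the projections) of the underlying pullback in \(\Cat\), and conclude sync minimality. The only difference is cosmetic: where the paper invokes \cref{Sync minimal core and lens structure on inclusion} (which cites the fact that a lens surjective on objects is surjective on morphisms), you inline that step with a direct \PutGet{} argument.
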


To prove this proposition, we will consider what happens when there is a comparison lens to a proxy pullback from its sync-minimal core.

\begin{lemma}
\label{Sync minimal core and lens structure on inclusion}
Let \(\A \xleftarrow{\Gbar} \D \xrightarrow{\Fbar} \B\) be a lens span. The functor \(\syncCounit{(\Gbar,\Fbar)}\) has a lens structure if and only if it is the identity functor on \(\D\), in which case \((\Gbar,\Fbar) = \sync (\Gbar,\Fbar)\) is sync minimal.
\end{lemma}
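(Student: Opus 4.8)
The plan is to exploit the fact that $\syncCounit{(\Gbar,\Fbar)}$ is, by construction, the inclusion of a \emph{wide} subcategory of $\D$. Indeed, since $\id{D} = \lift{\Gbar}{D}\id{\Gbar D}$ is a lift along $\Gbar$ for every $D \in \objectSet{\D}$, no object of $\D$ is discarded when forming the sync-minimal core, so $\syncCounit{(\Gbar,\Fbar)}$ is the identity on objects and is the inclusion of a subcategory of $\D$ closed under composition. Write $\D'$ for the apex of $\sync(\Gbar,\Fbar)$ and $\varepsilon$ for $\syncCounit{(\Gbar,\Fbar)} \colon \D' \to \D$; recall also, as noted when the sync-minimal core was defined, that $\sync(\Gbar,\Fbar)$ is always a sync-minimal lens span.

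For the forward implication, I would suppose $\varepsilon$ underlies a lens. Its put cofunctor then shares the object function of its get functor, which is the identity on $\objectSet{\D}$, so for each $D$ it supplies a lifting function $\lift{\varepsilon}{D} \colon \outSet{\D}{D} \to \outSet{\D'}{D}$. Applying the \PutGet{} axiom, for every $d \in \outSet{\D}{D}$ we get $d = \varepsilon\lift{\varepsilon}{D}d = \lift{\varepsilon}{D}d$, the second equality holding because $\varepsilon$ is a subcategory inclusion; hence $d$ is already a morphism of $\D'$. As this applies to every morphism out of every object of $\D$, and $\D'$ is a subcategory of $\D$ on the same objects, we conclude $\D' = \D$ and $\varepsilon = \id{\D}$. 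The converse is immediate, since $\id{\D}$ carries the identity lens structure; and in that case $\D' = \D$ makes the legs $\Gbar \compose \varepsilon$ and $\Fbar \compose \varepsilon$ of $\sync(\Gbar,\Fbar)$ equal to $\Gbar$ and $\Fbar$, so $(\Gbar,\Fbar) = \sync(\Gbar,\Fbar)$, which is sync minimal.

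I do not expect a serious obstacle. The only points requiring care are the bookkeeping around what it means for $\syncCounit{(\Gbar,\Fbar)}$ to \emph{have a lens structure} — namely that its get part is pinned down to be the inclusion and only a compatible put cofunctor is being sought — and the observation that this inclusion is surjective on objects, which is exactly what lets \PutGet{} be invoked at every object of $\D$ and thereby forces $\D' = \D$.
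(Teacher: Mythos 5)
Your proof is correct and follows essentially the same route as the paper: both arguments reduce to the observation that \(\syncCounit{(\Gbar,\Fbar)}\) is identity-on-objects and a subset inclusion on morphisms, so any lens structure on it forces surjectivity on morphisms and hence \(\syncCounit{(\Gbar,\Fbar)} = \id{\D}\). The only difference is that the paper cites the known fact that a lens surjective on objects is surjective on morphisms, whereas you inline the relevant instance of that argument by applying \PutGet{} directly to the inclusion, which is a perfectly fine self-contained variant.
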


\begin{proof}
By construction, the functor \(\syncCounit{(\Gbar,\Fbar)}\) is an identity function on objects and is a subset inclusion on morphisms. A lens that is surjective on objects is also surjective on morphisms~\cite{Clarke:2021:CategoryLens}. Hence, if \(\syncCounit{(\Gbar,\Fbar)}\) is the get functor of a lens, then it is surjective on morphisms and thus actually the identity functor. Conversely, if \(\syncCounit{(\Gbar,\Fbar)}\) is the identity functor on \(\D\), then it is the get functor of the identity lens on~\(\D\).
\end{proof}

\begin{proof}[Proof of \cref{Sync minimal necessary for terminality}.]
Suppose that \((\Gbar, \Fbar)\) is terminal amongst the independent spans that are compatible with \((F,G)\). As the span \((\Gbar, \Fbar)\) is a proxy pullback, it is by definition compatible with the cospan \((F, G)\), and it is independent by \cref{Proxy pullback is independent}. The independence of the span \(\sync (\Gbar, \Fbar)\) and its compatibility with the cospan \((F, G)\) follows from these properties of the span \((\Gbar, \Fbar)\); the former from the way that the sync-minimal core is defined, and the latter because independence is defined in terms of the sync-minimal core. By our assumption, there is thus a comparison lens~\(H\) from the span \(\sync (\Gbar, \Fbar)\) to the span \((\Gbar, \Fbar)\). By the universal property of the pullback span \((\forgetLensToCat \Gbar, \forgetLensToCat \Fbar)\) in \(\Cat\), the functors \(\forget H\) and \(\syncCounit{(\Gbar,\Fbar)}\) are both the unique comparison functor from the span of get functors of \(\sync (\Gbar, \Fbar)\) to the pullback span \((\forgetLensToCat \Gbar, \forgetLensToCat \Fbar)\), and so they are necessarily equal. The result then follows by \cref{Sync minimal core and lens structure on inclusion}.
\end{proof}

\section{Proxy Pullbacks of Split Opfibrations}
\label{Section: proxy pullbacks of split opfibrations}

In the remainder of this paper, we unpack the results in the previous two sections for the proxy pullback of a lens cospan with additional known properties. In this section, we consider what happens when one of the legs of the cospan is a split opfibration. 

\begin{proposition}
\label{Proxy pullback split opfibration terminal}
A proxy-pullback span of a lens cospan with one leg a split opfibration is terminal amongst the independent lens spans that are compatible with the cospan.
\end{proposition}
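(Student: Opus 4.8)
The plan is to deduce this from \cref{Sync minimal proxy pullback terminal amongst compatible independent} by showing that any proxy pullback of a lens cospan with one leg a split opfibration is automatically sync minimal. Fix a proxy-pullback square with span \(\A \xleftarrow{\Gbar} \D \xrightarrow{\Fbar} \B\) and cospan \(\A \xrightarrow{F} \C \xleftarrow{G} \B\), and assume without loss of generality that \(G\) is the split opfibration, the case of \(F\) being entirely symmetric with the roles of \(\Gbar\) and \(\Fbar\) swapped. I would use that every morphism \(d\) of \(\D\) equals \(\pair{\Gbar d}{\Fbar d}\), that \(F\Gbar d = G\Fbar d\) since the underlying square of functors commutes, and that \(\lift{\Gbar}{D}a = \pair{a}{\lift{G}{\Fbar D}Fa}\) and \(\lift{\Fbar}{D}b = \pair{\lift{F}{\Gbar D}Gb}{b}\).

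First I would show that every morphism \(d \colon D \to D'\) of \(\D\) factors as a lift along \(\Gbar\) followed by a lift along \(\Fbar\). Writing \(a = \Gbar d\) and \(b = \Fbar d\), so that \(Fa = Gb\), the morphism \(\lift{\Gbar}{D}a = \pair{a}{\lift{G}{\Fbar D}Fa}\) has target \(D''\) with \(\Gbar D'' = \Gbar D'\) and \(\Fbar D'' = \target\lift{G}{\Fbar D}Fa\). Since \(G\) is a split opfibration and \(Gb = Fa\), there is a morphism \(v \colon \Fbar D'' \to \Fbar D'\) in \(\B\) with \(b = v \compose \lift{G}{\Fbar D}Fa\) and \(Gv = \id{G\Fbar D'}\). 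As \(G\Fbar D' = F\Gbar D'\), the \PutId{} axiom for \(F\) gives \(\lift{F}{\Gbar D'}Gv = \lift{F}{\Gbar D'}\id{F\Gbar D'} = \id{\Gbar D'}\), hence \(\lift{\Fbar}{D''}v = \pair{\lift{F}{\Gbar D'}Gv}{v} = \pair{\id{\Gbar D'}}{v}\). Composing, \(\lift{\Fbar}{D''}v \compose \lift{\Gbar}{D}a = \pair{\id{\Gbar D'} \compose a}{v \compose \lift{G}{\Fbar D}Fa} = \pair{a}{b} = d\). Thus \((\Gbar, \Fbar)\) is sync minimal, and \cref{Sync minimal proxy pullback terminal amongst compatible independent} immediately gives that it is terminal amongst the independent lens spans compatible with \((F,G)\).

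The main obstacle is simply spotting this two-step factorisation: using the universal property of the split opfibration \(G\) to peel off the ``vertical'' residual \(v\) of \(b\) after lifting \(a\) along \(\Gbar\), and then recognising \(\pair{\id{\Gbar D'}}{v}\) as exactly the \(\Fbar\)-lift of \(v\) by the \PutId{} axiom for \(F\). Everything else is short bookkeeping with the pairing notation and an appeal to the earlier theorem, so no fresh reasoning about comparison lenses is required.
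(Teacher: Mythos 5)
Your proposal is correct and takes essentially the same route as the paper: the paper also deduces the statement from \cref{Sync minimal proxy pullback terminal amongst compatible independent} together with \cref{Split opfibration proxy pullback is sync minimal}, whose proof is exactly your two-step factorisation of each morphism of \(\D\) into a lift along one projection followed by the lift of the opcartesian residual along the other. The only difference is cosmetic—you take \(G\) to be the split opfibration and factor as a \(\Gbar\)-lift followed by an \(\Fbar\)-lift, whereas the paper takes \(F\) and factors in the mirror-image order.
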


\Cref{Proxy pullback split opfibration terminal} follows directly from \cref{Sync minimal proxy pullback terminal amongst compatible independent} and the following lemma.

\begin{lemma}
\label{Split opfibration proxy pullback is sync minimal}
Consider a proxy-pullback square
\[
\begin{tikzcd}
\D  \arrow[d, "\Gbar" swap]
    \arrow[r, "\Fbar"]
    \arrow[dr, PPB] &
\B  \arrow[d, "G"]\\
\A  \arrow[r, "F" swap]&
\C
\end{tikzcd}.
\]
If \(F\) or \(G\) is a split opfibration then the lens span \((\Gbar, \Fbar)\) is sync minimal.
\end{lemma}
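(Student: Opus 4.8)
The plan is to work with the apex $\D$ of the proxy pullback as the honest pullback in $\Cat$ of the get functors $F$ and $G$, writing morphisms of $\D$ using the pairing notation of \cref{Section: Notation} and using the formulas $\lift{\Gbar}{D}a = \pair{a}{\lift{G}{\Fbar D}Fa}$ and $\lift{\Fbar}{D}b = \pair{\lift{F}{\Gbar D}Gb}{b}$ for the lifts of the pullback projections (from the discussion around \cref{Proxy pullback creation}). Since the proxy-pullback square is symmetric in its two legs and sync minimality is symmetric in $F$ and $G$, it suffices to treat the case in which $G$ is a split opfibration.

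The key observation I would isolate first is that every morphism $d$ of $\D$ whose first component $\Gbar d$ is an identity is automatically an $\Fbar$-lift: if $\Gbar d = \id{X}$, then $G\Fbar d = F\Gbar d = \id{FX}$, so $\lift{\Fbar}{\source d}\Fbar d = \pair{\lift{F}{X}\id{FX}}{\Fbar d} = \pair{\id{X}}{\Fbar d} = d$ by the \PutId{} axiom for $F$.

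Given an arbitrary morphism $d \colon D \to D'$ of $\D$, I would write $a = \Gbar d$ and $b = \Fbar d$, so $d = \pair{a}{b}$ and $Fa = Gb$. Because $G$ is a split opfibration, the characterisation of split opfibrations in \cref{Section: Discrete opfibrations and split opfibrations}, applied to $b$, gives a morphism $u$ with $Gu$ an identity such that $b = u \compose \lift{G}{\Fbar D}Gb$. One then checks that $\pair{\id{\Gbar D'}}{u}$ is a well-defined morphism of $\D$ — the condition $F\id{\Gbar D'} = Gu$ reduces to $\id{F\Gbar D'} = \id{G\Fbar D'}$, which holds since $F\Gbar$ and $G\Fbar$ agree on objects of $\D$ — and that its source matches the target of $\lift{\Gbar}{D}a = \pair{a}{\lift{G}{\Fbar D}Fa} = \pair{a}{\lift{G}{\Fbar D}Gb}$. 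The composite $\pair{\id{\Gbar D'}}{u} \compose \lift{\Gbar}{D}a$ then has first component $a$ and second component $u \compose \lift{G}{\Fbar D}Gb = b$, so it equals $d$; by the key observation it is a $\Gbar$-lift followed by an $\Fbar$-lift, so $(\Gbar, \Fbar)$ is sync minimal.

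The only thing requiring care is the bookkeeping of the intermediate objects, sources and targets — confirming that the various pairs genuinely lie in the pullback and that the two lifts compose — and these checks all reduce to the cofunctor axioms (PutTgt) and \PutId{} together with the fact that $F\Gbar$ and $G\Fbar$ agree on $\D$; none of it is deep. The conceptual heart is that a split opfibration lets us write $b$ as the chosen lift of $Gb$ followed by a morphism over an identity, and that a morphism of $\D$ lying over an identity of $\A$ is automatically an $\Fbar$-lift.
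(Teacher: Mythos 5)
Your proof is correct and is essentially the paper's own argument, just mirrored: the paper takes \(F\) to be the split opfibration and writes \(d\) as a \(\Gbar\)-lift composed after an \(\Fbar\)-lift, using the same factorisation of a morphism through the chosen lift followed by a morphism over an identity, and the same pairing formulas for the projection lifts. Your ``key observation'' (a morphism of \(\D\) lying over an identity is automatically a lift along the other projection) is exactly the paper's step \(\pair{u}{\id{B_2}} = \pair{u}{\lift{G}{B_2}Fu} = \lift{\Gbar}{\pair{A_2'}{B_2}}u\), so there is no substantive difference.
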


\begin{proof}
Without loss of generality, suppose that \(F\) is a split opfibration. Let \(d \colon D_1 \to D_2\) be a morphism in \(\D\), and let \(a = \Gbar d \colon A_1 \to A_2\) and \(b = \Fbar d \colon B_1 \to B_2\). Let \(u\) be the unique comparison morphism from the \(F\)-opcartesian morphism \(\lift{F}{A_1}Fa\) to \(a\), as in the diagram
\[\begin{tikzcd}[column sep=huge]
A_1 \arrow[dr, "a" swap]\arrow[r, "\lift{F}{A_1}Fa"]&
A_2'\arrow[d, dashed, "u"]\\
&
A_2
\end{tikzcd}.\]
Then
\(
d = \pair{a}{b} = \pair{u}{\id{B_2}} \compose \pair{\lift{F}{A_1}Fa}{b} = \pair{u}{\lift{G}{B_2}Fu} \compose \pair{\lift{F}{A_1}Gb}{b} = \lift{\Gbar}{\pair{A_2'}{B_2}}u \compose \lift{\Fbar}{\pair{A_1}{B_1}}b
\).
\end{proof}

\begin{remark}
Split opfibrations are pullback stable, so in the proof above \(\Fbar\) is actually a split opfibration. However, lens spans with one leg a split opfibration are not in general sync minimal.
\end{remark}

Shortly we will see that for a lens cospan with one leg a split opfibration, the independence condition for lens spans forming compatible squares with the cospan is equivalent to a simpler notion of independence, which we will call \textit{split independence}.

\begin{definition}
A lens span \(\A \xleftarrow{\Gbar} \D \xrightarrow{\Fbar} \B\) is called \(\Fbar\)-\textit{split independent} if for all \(D_1 \in \objectSet{\D}\), all \(a_1 \colon \Gbar D_1 = A_1 \to A_1'\) in \(\A\), all \(b \colon \Fbar D_1 = B_2 \to B_2\) in \(\B\), and all \(a_2 \colon \target \Gbar \lift{\Fbar}{D_1}b = A_2 \to A_2'\), as shown in the diagram
\begin{equation}
\label{Equation: Simple Independence}
\begin{tikzpicture}[
    x=2.6em, y=2.3em,
    commutative diagrams/.cd,
    every diagram
]
    \begin{scope}
        \node[boxed, fit={(-2,-1.75) (2, 1.75)}, label={below:\(\D\)}] (Dcat) {};
    
        \begin{scope}[
            commutative diagrams/.cd,
            every node,
            every cell
        ]
            \node (D) at (-1,1) {\(D_1\)};
            \node (D1) at (-1,-1) {\(D_1'\)};
            \node (D2) at (1,1) {\(D_2\)};
            \node (D1p) at (0.5,-1) {\(\overCapIt{D_2'}\)};
            \node (D2p) at (1,-0.5) {\(D_2'\)};
        \end{scope}
        
        \path[
            commutative diagrams/.cd,
            every arrow,
            every label
        ]
            (D) edge node[swap] {\(\lift{\Gbar}{D_1}a_1\)} (D1)
                edge node {\(\lift{\Fbar}{D_1}b\)} (D2)
            (D1) edge node[swap] {\(\lift{\Fbar}{D_1'}b\)} (D1p)
            (D2) edge node {\(\lift{\Gbar}{D_2}a_2\)} (D2p)
        ;
    \end{scope}
    
    \begin{scope}[shift={(-4.5,0)}]
        \node[boxed, fit={(-1.5,-1.75) (1.5, 1.75)}, label={below:\(\A\)}] (Acat) {};
        
        \begin{scope}[
            commutative diagrams/.cd,
            every node,
            every cell
        ]
            \node (A1) at (-1,1) {\(A_1\)};
            \node (A1p) at (-1,-1) {\(A_1'\)};
            \node (A2) at (1,1) {\(A_2\)};
            \node (A2p) at (1,-1) {\(A_2'\)};
        \end{scope}
        
        \path[
            commutative diagrams/.cd,
            every arrow,
            every label
        ]
            (A1) edge node {\(\Gbar \lift{\Fbar}{D_1}b\)} (A2)
                edge node[swap] {\(a_1\)} (A1p)
            (A2) edge node {\(a_2\)} (A2p)
            (A1p) edge node[swap] {\(\Gbar \lift{\Fbar}{D_1'}b\)} (A2p)
        ;
    \end{scope}
    
    \begin{scope}[shift={(5.25,0)}]
        \node[boxed, fit={(-2.25,-1.75) (2.25, 1.75)}, label={below:\(\B\)}] (Bcat) {};
    
        \begin{scope}[
            commutative diagrams/.cd,
            every node,
            every cell
        ]
            \node (D) at (-1,1) {\(B_1\)};
            \node (D1) at (-1,-1) {\(B_1\)};
            \node (D2) at (1,1) {\(B_2\)};
            \node (D1p) at (1,-1) {\(B_2\)};
        \end{scope}
        
        \path[
            commutative diagrams/.cd,
            every arrow,
            every label
        ]
            (D) edge[commutative diagrams/equals] node[swap] {\(\Fbar\lift{\Gbar}{D_1}a_1\)} (D1)
                edge node {\(b\)} (D2)
            (D1) edge node[swap] {\(b\)} (D1p)
            (D2) edge[commutative diagrams/equals] node {\(\Fbar\lift{\Gbar}{D_2}a_2\)} (D1p)
        ;
    \end{scope}
    
    \draw[
        commutative diagrams/.cd,
        every arrow,
        every label,
        shorten=0.25em,
        mapsto]
        (Dcat) -- node[swap] {\(\Gbar\)} (Acat);
    
    \draw[
        commutative diagrams/.cd,
        every arrow,
        every label,
        shorten=0.25em,
        mapsto]    
        (Dcat) -- node {\(\Fbar\)} (Bcat);
\end{tikzpicture}\,,
\end{equation}
whenever the square in \(\A\) commutes and \(\Fbar \lift{\Gbar}{D_1}a_1 = \id{B_1}\) and \(\Fbar \lift{\Gbar}{D_2}a_2 = \id{B_2}\), then
also \(D_2' = \overCapIt{D_2'}\) and the resulting square in \(\D\) commutes.
\end{definition}

\begin{proposition}
\label{Independence and split opfibration}
Consider a compatible lens square
\[
\begin{tikzcd}
\D  \arrow[d, "\Gbar" swap]
    \arrow[r, "\Fbar"]&
\B  \arrow[d, "G"]\\
\A  \arrow[r, "F" swap]&
\C
\end{tikzcd}.
\]
If \(F\) is a split opfibration, then \((\Gbar, \Fbar)\) is independent if and only if it is \(\Fbar\)-split independent.
\end{proposition}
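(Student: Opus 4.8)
The plan is to prove the two implications separately. The implication from independence to $\Fbar$-split independence does not even use that $F$ is a split opfibration: given the data in the definition of $\Fbar$-split independence, with the square in $\A$ commuting and with $\Fbar\lift{\Gbar}{D_1}a_1$ and $\Fbar\lift{\Gbar}{D_2}a_2$ both identities, the two morphisms $\lift{\Fbar}{D_1'}b \compose \lift{\Gbar}{D_1}a_1$ and $\lift{\Gbar}{D_2}a_2 \compose \lift{\Fbar}{D_1}b$ have the common source $D_1$ and are composites of lifts along $\Gbar$ and $\Fbar$, so they lie in the apex of $\sync(\Gbar,\Fbar)$. Applying $\Gbar$ to each, using \PutGet{} for $\Gbar$, turns them into the two legs of the commuting square in $\A$, hence they have equal $\Gbar$-image; applying $\Fbar$ and using \PutGet{} for $\Fbar$ together with the two identity hypotheses, both have $\Fbar$-image $b$. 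Independence of $(\Gbar,\Fbar)$ then forces the two morphisms to be equal, and this gives both $D_2' = \overCapIt{D_2'}$ (equal morphisms have equal targets) and the commutativity of the square in $\D$.

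The converse is the substantial direction, and its engine is a \emph{normal form} for the morphisms of the sync-minimal core. I would first establish that, using compatibility and that $F$ is a split opfibration, every composite of lifts along $\Gbar$ and $\Fbar$ with source $D$ can be written as $\lift{\Gbar}{D'}v \compose \lift{\Fbar}{D}b$ for a single $\Fbar$-lift $\lift{\Fbar}{D}b$ out of $D$ and a $\Gbar$-lift $\lift{\Gbar}{D'}v$ whose $\A$-component $v$ has identity $F$-image (equivalently, by compatibility, with $\Fbar\lift{\Gbar}{D'}v$ an identity). This is proved by induction on the length of a decomposition into lifts, the identity being the base case. In the inductive step one postcomposes a normal form $\lift{\Gbar}{D'}v \compose \lift{\Fbar}{D}b$ by a further lift. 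Postcomposing by a $\Gbar$-lift reduces, by \PutPut{} for $\Gbar$, to normalising $\lift{\Gbar}{D'}(a \compose v)$; factoring $a \compose v$ through the $F$-opcartesian lift of its $F$-image — possible since $F$ is a split opfibration — and then using that the underlying cofunctor square commutes rewrites $\lift{\Gbar}{D'}\lift{F}{\Gbar D'}Fa$ as an $\Fbar$-lift, which \PutPut{} for $\Fbar$ absorbs into $\lift{\Fbar}{D}b$. Postcomposing by an $\Fbar$-lift is where $\Fbar$-split independence is used: one must commute the new $\Fbar$-lift past the $\Gbar$-lift $\lift{\Gbar}{D'}v$, and, since $\Fbar D'$ is the source of both $\Fbar$-lifts in the relevant instance of the diagram in \cref{Equation: Simple Independence}, this is exactly the content of that diagram with $a_1 := v$; the required auxiliary morphism $a_2$ with identity $F$-image making the square in $\A$ commute is supplied as the unique comparison morphism out of the $F$-opcartesian lift $\lift{F}{\Gbar D'}Gb = \Gbar\lift{\Fbar}{D'}b$, so $\Fbar$-split independence yields that the resulting square in $\D$ commutes, which moves the $\Fbar$-lift to the left and lets \PutPut{} for $\Fbar$ absorb it.

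Independence then follows at once. If $c$ and $c'$ are morphisms of the apex of $\sync(\Gbar,\Fbar)$ with the same source $D$ and $\Gbar c = \Gbar c'$, $\Fbar c = \Fbar c'$, write $c = \lift{\Gbar}{E}v \compose \lift{\Fbar}{D}b$ and $c' = \lift{\Gbar}{E'}v' \compose \lift{\Fbar}{D}b'$ in normal form. Applying $\Fbar$ and using \PutGet{} and that $v, v'$ have identity $F$-image gives $b = \Fbar c = \Fbar c' = b'$, so $\lift{\Fbar}{D}b = \lift{\Fbar}{D}b'$ and in particular $E = E'$; applying $\Gbar$ and using compatibility gives $v \compose \lift{F}{\Gbar D}Gb = \Gbar c = \Gbar c' = v' \compose \lift{F}{\Gbar D}Gb$, and since $\lift{F}{\Gbar D}Gb$ is $F$-opcartesian and $v, v'$ both have identity $F$-image, its universal property forces $v = v'$. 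Hence $c = c'$.

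The main obstacle is the normal-form lemma, and within it the step of commuting an $\Fbar$-lift past a $\Gbar$-lift with identity $F$-image. The insight is that $\Fbar$-split independence was designed precisely to supply this move; the delicate bookkeeping is matching the data to the split-independence diagram — in particular checking that the same morphism plays the role of $b$ on both sides of it — and building the auxiliary morphism $a_2$ from the opcartesian universal property of the appropriate $F$-lift. Everything else is routine manipulation of the lens axioms, the compatibility equations, and the split-opfibration factorisation.
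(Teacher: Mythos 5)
Your proposal is correct and follows essentially the paper's own route: your normal-form lemma is exactly \cref{Split independence and factorisation} (an \(\Fbar\)-lift followed by a \(\Gbar\)-lift of a morphism with identity \(F\)-image), proved by the same induction on a decomposition into lifts, with the split-opfibration factorisation handling the \(\Gbar\)-lift case and \(\Fbar\)-split independence handling the \(\Fbar\)-lift case, and independence then deduced from the opcartesian uniqueness just as in the paper. The \emph{only if} direction you spell out is what the paper dismisses as immediate from the definition, and it is fine.
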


The \textit{only if} direction follows directly from the definition of independence. Essential to the proof of the \textit{if} direction is the following lemma.

\begin{lemma}
\label{Split independence and factorisation}
Suppose that \(F\) is a split opfibration and \((\Gbar, \Fbar)\) is \(\Fbar\)-split independent. Then, each morphism \(d \colon D_1 \to D_2\) in the apex of \(\sync (\Gbar, \Fbar)\) has the factorisation
\begin{equation}
    \label{Split opfibration factorisation 1}
    \begin{tikzcd}[column sep=huge]
    D_1 \arrow[r, "\lift{\Fbar}{D_1} \Fbar d"]
        \arrow[dr, "d" swap]&
    D_2' \arrow[d, "\lift{\Gbar}{D_2'} u"]\\
    & D_2
    \end{tikzcd}
\end{equation}
where \(u \colon \Gbar D_3 \to \Gbar D_2\) comes from the universal property of the \(F\)-opcartesian morphism \(\lift{F}{\Gbar D_1}G\Fbar d\), that is, \(u\) is the unique morphism of \(\A\) for which \(Fu = \id{F\Gbar D_2}\) and the diagram
\begin{equation}
     \label{Split opfibration factorisation 2}
    \begin{tikzcd}[column sep=huge]
    \Gbar D_1 \arrow[r, "\lift{F}{\Gbar D_1}G\Fbar d"]
        \arrow[dr, "\Gbar d" swap]&
    \Gbar D_2' \arrow[d, "u"]\\
    & \Gbar D_2
    \end{tikzcd}
\end{equation}
commutes. In particular, the right leg of \(\sync (\Gbar, \Fbar)\) is also a split opfibration.
\end{lemma}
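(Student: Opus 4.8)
The plan is to prove the factorisation by induction on the length of a chosen presentation of \(d\) as a composite of lifts along \(\Gbar\) or \(\Fbar\); such a presentation exists exactly because \(d\) lies in the apex of \(\sync(\Gbar,\Fbar)\). Before the induction I would record the bookkeeping that makes the statement type-check. Since the square commutes, \(G\Fbar d = F\Gbar d\), so \(\lift{F}{\Gbar D_1}G\Fbar d\) is \(F\)-opcartesian — this is where the hypothesis that \(F\) is a split opfibration enters — and \(\Gbar d\) factors through it, producing \(u\) with \(Fu = \id\). The compatibility equations \(\Gbar\lift{\Fbar}{D}b = \lift{F}{\Gbar D}Gb\) and \(\Fbar\lift{\Gbar}{D}a = \lift{G}{\Fbar D}Fa\), together with \PutGet{} and \PutId{}, then show that \(\lift{\Gbar}{D_2'}u\) is \emph{vertical}, in the sense that its \(\Fbar\)-image is an identity, and that the composite \(\lift{\Gbar}{D_2'}u\compose\lift{\Fbar}{D_1}\Fbar d\) shares the source \(D_1\), the \(\Gbar\)-image \(\Gbar d\) and the \(\Fbar\)-image \(\Fbar d\) with \(d\). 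I would also note that the underlying cofunctor square commutes, i.e.\ \(\lift{\Gbar}{D}\lift{F}{\Gbar D}c = \lift{\Fbar}{D}\lift{G}{\Fbar D}c\).

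The base case \(d = \id\) is immediate. For the inductive step write \(d = c\compose d'\) with \(c\) a single lift and \(d'\) a shorter composite of lifts, factored as \(d' = \lift{\Gbar}{D''}u'\compose\lift{\Fbar}{D_1}\Fbar d'\) by the inductive hypothesis. When \(c\) is a lift along \(\Gbar\), \PutPut{} for \(\Gbar\) collapses \(c\compose\lift{\Gbar}{D''}u'\) into a single \(\Gbar\)-lift, \PutPut{} for \(\Fbar\) and the commuting cofunctor square rewrite \(\lift{\Fbar}{D_1}\Fbar d\) with its vertical part based at the appropriate object, and the universal properties of the opcartesian lifts involved identify the resulting \(\A\)-component with \(u\); no independence hypothesis is used in this case. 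When \(c\) is a lift along \(\Fbar\), after \PutPut{} for \(\Fbar\) the sole obstruction is to exchange the vertical \(\Gbar\)-lift \(\lift{\Gbar}{D''}u'\) past \(c\), turning a \(\Gbar\)-lift-then-\(\Fbar\)-lift composite into an \(\Fbar\)-lift-then-\(\Gbar\)-lift composite; this is precisely what \(\Fbar\)-split independence supplies, once one uses the \(F\)-opcartesianness of \(\Gbar\lift{\Fbar}{D''}e = \lift{F}{\Gbar D''}Ge\), where \(e = \Fbar c\), to produce the morphism \(a_2\) of \(\A\) witnessing the hypotheses of \(\Fbar\)-split independence for the relevant square. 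Merging the two \(\Fbar\)-lifts by \PutPut{} and invoking the universal property of \(\lift{F}{\Gbar D_1}G\Fbar d\) once more to recognise the \(\A\)-component of the result as \(u\) then yields the factorisation of \(d\). I expect this \(\Fbar\)-lift case — choosing \(a_2\), verifying all the hypotheses of \(\Fbar\)-split independence, and tracking the base objects of the lifts through the exchange square — to be the only real obstacle; everything else is routine manipulation of the lens axioms and the compatibility equations.

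For the final assertion, the factorisation provides, for each \(d\colon D_1\to D_2\) in the apex of \(\sync(\Gbar,\Fbar)\), a morphism \(u'\defeq\lift{\Gbar}{D_2'}u\) with \(d = u'\compose\lift{\Fbar}{D_1}\Fbar d\) and \(\Fbar u'\) an identity, so by the characterisation of split opfibrations in \cref{Section: Discrete opfibrations and split opfibrations} it remains only to prove uniqueness of such a \(u'\) in the apex of \(\sync(\Gbar,\Fbar)\). But any competing factor is again a morphism of that apex whose \(\Fbar\)-image is an identity, so the factorisation just proved makes it a \(\Gbar\)-lift of its own \(\Gbar\)-image, and that \(\Gbar\)-image is forced by the universal property of \(\lift{F}{\Gbar D_1}G\Fbar d\) to equal \(u\); hence the two factors coincide and the right leg of \(\sync(\Gbar,\Fbar)\) is a split opfibration.
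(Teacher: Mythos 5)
Your proof is correct and follows essentially the same route as the paper's: an induction along the decomposition of \(d\) into lifts, using compatibility, \PutPut{} and the chosen \(F\)-opcartesian lifts in the \(\Gbar\)-lift case, \(\Fbar\)-split independence to exchange a vertical \(\Gbar\)-lift past an \(\Fbar\)-lift in the \(\Fbar\)-lift case, and the opcartesian universal property to identify the vertical factor with \(u\); the paper merely organises the same induction as a commuting ladder built left to right rather than by peeling off the final lift. You also spell out the existence-and-uniqueness argument for the closing claim that the right leg of \(\sync(\Gbar,\Fbar)\) is a split opfibration, which the paper's proof leaves implicit.
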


We will return to prove the lemma shortly, but let us first finish the proof of the proposition.

\begin{proof}[Proof of \emph{if} direction of \cref{Independence and split opfibration}.]
If \(F\) is a split opfibration and \((\Gbar, \Fbar)\) is \(\Fbar\)-split independent, then \cref{Split independence and factorisation} implies that each morphism \(d\) in the apex of \(\sync (\Gbar, \Fbar)\) is uniquely determined by the data \(\Gbar d\) and \(\Fbar d\). Indeed, from \cref{Split opfibration factorisation 1}, \(d\) is a composite of morphisms expressed in terms of \(\Fbar d\) and \(u\), and \(u\) itself is uniquely determined by the top side and hypotenuse of the triangle \cref{Split opfibration factorisation 2}, which are themselves expressed in terms of \(\Gbar d\) and \(\Fbar d\).
\end{proof}

\begin{proof}[Proof of \cref{Split independence and factorisation}.]
Recall that each morphism in the apex of \(\sync{(\Gbar, \Fbar)}\) is a composite 
\[
\begin{tikzcd}[column sep=large]
D_1 \arrow[r, "d_1"]&
D_2 \arrow[r, "d_2"]&
D_3 \arrow[r, phantom, "\cdots"]&[-2em]
D_{n-1} \arrow[r, "d_{n-1}"]&
D_{n}
\end{tikzcd}
\]
of morphisms in \(\D\) such that, for each \(k\), either \(d_k = \lift{\Gbar}{D_k} a_k\) or \(d_k = \lift{\Fbar}{D_k}b_k\), where \(a_k = \Gbar d_k\) and \(b_k = \Fbar d_k\). We will inductively construct the dashed morphisms in the diagram
\[
\begin{tikzcd}[column sep=huge]
D_1' \arrow[r, "\lift{\Fbar}{D_1'}b_1", dashed]
    \arrow[d, "\lift{\Gbar}{D_1'}u_1" swap, equals]&
D_2' \arrow[r, "\lift{\Fbar}{D_2'}b_2", dashed]
    \arrow[d, "\lift{\Gbar}{D_2'}u_2" swap, dashed]&
D_3'\arrow[r, phantom, "\cdots"]
    \arrow[d, "\lift{\Gbar}{D_3'}u_3" swap, dashed]&[-1em]
D_{n-1}' \arrow[r, "\lift{\Fbar}{D_{n-1}'}b_{n-1}", dashed]
    \arrow[d, "\lift{\Gbar}{D_{n-1}'}u_{n-1}", dashed]&
D_{n}' \arrow[d, "\lift{\Gbar}{D_n'}u_n", dashed]\\
D_1 \arrow[r, "d_1" swap]&
D_2 \arrow[r, "d_2" swap]&
D_3 \arrow[r, phantom, "\cdots"]&[-2em]
D_{n-1} \arrow[r, "d_{n-1}" swap]&
D_{n}
\end{tikzcd}
\]
in \(\D\) such that the resulting diagram in \(\D\) commutes and \(\Fbar \lift{\Gbar}{D'_k}u_k = \id{\Fbar D_k}\) for each \(k\). 

For the base step, we may set \(D_1' = D_1\) and \(u_1 = \id{\Gbar D_1}\), so that \(\Fbar\lift{\Gbar}{D_1'}u_1 = \Fbar\id{D_1} = \id{\Fbar D_1}\).

For the inductive step, suppose that we have already constructed \(D_k'\) and \(u_k\), and wish now to construct \(D_{k + 1}'\) and \(u_{k + 1}\). Consider the diagram
\[
\begin{tikzpicture}[
    x=3.6em, y=2.4em,
    commutative diagrams/.cd,
    every diagram
]
    \begin{scope}[shift={(-2.5,2.25)}]
        \node[boxed, fit={(-1.8,-1.6) (2.2, 1.6)}, label={left:\(\D\)}] (Dcat) {};
    
        \begin{scope}[
            commutative diagrams/.cd,
            every node,
            every cell
        ]
            \node (D) at (-1,1) {\(D'_k\)};
            \node (D1) at (-1,-1) {\(D_k\)};
            \node (D2) at (1,1) {\(D_{k + 1}'\)};
            \node (D1p) at (0.5,-1) {\(D_{k + 1}\)};
            \node (D2p) at (1,-0.5) {\(\overCapIt{D_{k + 1}}\)};
        \end{scope}
        
        \path[
            commutative diagrams/.cd,
            every arrow,
            every label
        ]
            (D) edge node[swap] {\(\lift{\Gbar}{D_k'}u_k\)} (D1)
                edge[dashed] node {\(\lift{\Fbar}{D_k'}b_k\)} (D2)
            (D1) edge node[swap] {\(d_k\)} (D1p)
            (D2) edge[dashed] node {\(\lift{\Gbar}{D'_{k + 1}} u_{k + 1}\)} (D2p)
        ;
    \end{scope}
    
    \begin{scope}[shift={(2.5,-2.25)}]
        \node[boxed, fit={(-1.6,-1.6) (1.6, 1.6)}, label={right:\(\C\)}] (Ccat) {};
        
        \begin{scope}[
            commutative diagrams/.cd,
            every node,
            every cell
        ]
            \node (D) at (-1,1) {\(C_k\)};
            \node (D1) at (-1,-1) {\(C_k\)};
            \node (D2) at (1,1) {\(C_{k + 1}\)};
            \node (D1p) at (1,-1) {\(C_{k + 1}\)};
        \end{scope}
        
        \path[
            commutative diagrams/.cd,
            every arrow,
            every label
        ]
            (D) edge[commutative diagrams/equals] (D1)
                edge node {\(c_k\)} (D2)
            (D1) edge node[swap] {\(c_k\)} (D1p)
            (D2) edge[commutative diagrams/equals] (D1p)
        ;
    \end{scope}
    
    \begin{scope}[shift={(-2.5,-2.25)}]
        \node[boxed, fit={(-1.8,-1.6) (2.2, 1.6)}, label={left:\(\A\)}] (Acat) {};
        
        \begin{scope}[
            commutative diagrams/.cd,
            every node,
            every cell
        ]
            \node (A1) at (-1,1) {\(A_k'\)};
            \node (A1p) at (-1,-1) {\(A_k\)};
            \node (A2) at (1,1) {\(A_{k + 1}'\)};
            \node (A2p) at (1,-1) {\(A_{k + 1}\)};
        \end{scope}
        
        \path[
            commutative diagrams/.cd,
            every arrow,
            every label
        ]
            (A1) edge[dashed] node {\(\lift{F}{A'_k}c_k\)} (A2)
                edge node[swap] {\(u_k\)} (A1p)
            (A2) edge[dashed] node {\(u_{k + 1}\)} (A2p)
            (A1p) edge node[swap] {\(a_k\)} (A2p)
        ;
    \end{scope}
    
    \begin{scope}[shift={(2.5,2.25)}]
        \node[boxed, fit={(-1.6,-1.6) (1.6, 1.6)}, label={right:\(\B\)}] (Bcat) {};
    
        \begin{scope}[
            commutative diagrams/.cd,
            every node,
            every cell
        ]
            \node (D) at (-1,1) {\(B_k\)};
            \node (D1) at (-1,-1) {\(B_k\)};
            \node (D2) at (1,1) {\(B_{k + 1}\)};
            \node (D1p) at (1,-1) {\(B_{k + 1}\)};
        \end{scope}
        
        \path[
            commutative diagrams/.cd,
            every arrow,
            every label
        ]
            (D) edge[commutative diagrams/equals] (D1)
                edge node {\(b_k\)} (D2)
            (D1) edge node[swap] {\(b_k\)} (D1p)
            (D2) edge[commutative diagrams/equals] (D1p)
        ;
    \end{scope}
    
    \draw[
        commutative diagrams/.cd,
        every arrow,
        every label,
        shorten=0.2em,
        mapsto]
        (Dcat) -- node[swap] {\(\Gbar\)} (Acat);
    
    \draw[
        commutative diagrams/.cd,
        every arrow,
        every label,
        shorten=0.2em,
        mapsto]    
        (Dcat) -- node {\(\Fbar\)} (Bcat);
        
    \draw[
        commutative diagrams/.cd,
        every arrow,
        every label,
        shorten=0.2em,
        mapsto]
        (Bcat) -- node[] {\(G\)} (Ccat);
    
    \draw[
        commutative diagrams/.cd,
        every arrow,
        every label,
        shorten=0.2em,
        mapsto]    
        (Acat) -- node[swap] {\(F\)} (Ccat);
\end{tikzpicture}
\]
From the universal property of the \(F\)-opcartesian morphism \(\lift{F}{A'_k}c_k\), there is a unique morphism \(u_{k + 1} \colon A'_{k + 1} \to A_{k + 1}\) in \(\A\) above \(\id{C_{k + 1}}\) such that the square in \(\A\) above commutes.

Suppose that \(d_k = \lift{\Gbar}{D_k} a_k\). By the \PutPut{} axiom, and commutativity of the lens square,
\begin{equation}
    \label{Equation: case dk lift from A}
    d_k \compose \lift{\Gbar}{D_k'}u_k = \lift{\Gbar}{D'_{k + 1}} u_{k + 1} \compose \lift{\Gbar}{D_k'} \lift{F}{A_k'}c_k = \lift{\Gbar}{D'_{k + 1}} u_{k + 1} \compose \lift{\Fbar}{D_k'} \lift{G}{B_k}c_k.
\end{equation}
We also have \(\Fbar \lift{\Gbar}{D'_k} u_k = \lift{G}{B_k} F u_k = \lift{G}{B_k}\id{C_k} = \id{B_k}\) by compatibility of the lens square, and similarly \(\Fbar \lift{\Gbar}{D'_{k + 1}}u_{k + 1} = \id{B_{k + 1}}\). Hence, applying \(\Fbar\) to both sides of \cref{Equation: case dk lift from A}, we see that \(b_k = \lift{G}{B_k}c_k\). Thus \cref{Equation: case dk lift from A} actually says that \(\overCapIt{D_{k + 1}} = D_{k + 1}\) and the square in \(\D\) above commutes.

Otherwise, \(d_k = \lift{\Fbar}{D_k} b_k\), and thus also \(a_k = \Gbar \lift{\Fbar}{D_k}b_k\). Also, as the lens square is compatible, \(\lift{F}{A_k'}c_k = \Gbar \lift{\Fbar}{D_k'}{b_k}\). As \((\Gbar, \Fbar)\) is split independent, again \(\overCapIt{D_{k + 1}} = D_{k + 1}\) and the square in \(\D\) above commutes.
\end{proof}

\section{Proxy Pullbacks of Discrete Opfibrations and Proxy Products}
\label{Section: proxy pullbacks of discrete opfibrations and proxy products}

The results in the previous section apply in particular to proxy pullbacks of lens cospans with one leg a discrete opfibration. For proxy pullbacks of such cospans, \cref{Proxy pullback split opfibration terminal} simplifies as follows.

\begin{proposition}
\label{Proxy pullback of discrete opfibration is pullback}
Proxy pullbacks of discrete opfibrations are real pullbacks in \(\Lens\).
\end{proposition}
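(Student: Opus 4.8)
The plan is to derive this from \cref{Proxy pullback split opfibration terminal}. That result says a proxy-pullback span of a cospan with one split-opfibration leg is terminal amongst the independent lens spans that are compatible with the cospan; since a discrete opfibration is in particular a split opfibration, it applies here. So it suffices to show that, when one leg of the cospan is a discrete opfibration, \emph{every} lens span forming a commuting square with the cospan is automatically independent and compatible with the cospan — for then terminality amongst independent compatible spans is the same as terminality amongst all commuting spans, which is exactly the universal property of a pullback in \(\Lens\). By symmetry, I would reduce to the case where \(G \colon \B \to \C\) is the discrete opfibration, and fix a commuting lens square with legs \(K \colon \E \to \A\) and \(J \colon \E \to \B\) over the cospan \((F, G)\). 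Compatibility of this square is immediate from \cref{Compatible square and discrete opfibration}.

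The crux is independence. The key observation is that, since \(G\) satisfies \GetPut{}, every lift along \(J\) is already a lift along \(K\): for each \(E \in \objectSet{\E}\) and each \(b \in \outSet{\B}{JE}\),
\[
\lift{J}{E}b = \lift{J}{E}\lift{G}{JE}Gb = \lift{(G \compose J)}{E}Gb = \lift{(F \compose K)}{E}Gb = \lift{K}{E}\lift{F}{KE}Gb,
\]
using in turn \GetPut{} for \(G\), the definition of the composite cofunctor, equality of the put cofunctors of \(G \compose J\) and \(F \compose K\), and again the definition of the composite cofunctor. Since identity morphisms are lifts along \(K\) (by \PutId{} for \(K\)) and composites of lifts along \(K\) are lifts along \(K\) (by \PutPut{} for \(K\)), it follows that every morphism in the apex of \(\sync(K, J)\) is a single lift \(\lift{K}{E}a\), and hence, by \PutGet{} for \(K\), equals \(\lift{K}{E}(Kc)\) where \(c\) is the morphism and \(E\) its source. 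Consequently, any two morphisms in the apex of \(\sync(K, J)\) with the same source and the same image under \(K\) coincide, so \((K, J)\) is independent (indeed, even without using the hypothesis about images under \(J\)).

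Putting this together: every lens span forming a commuting square with \((F, G)\) is independent and compatible with \((F, G)\), so \cref{Proxy pullback split opfibration terminal} shows the proxy-pullback span is terminal amongst all such spans, i.e.\ it is a real pullback in \(\Lens\). I expect the independence step to be the only substantive part of the argument — the rest is assembling earlier results — and the one thing to get right there is the use of \GetPut{} for the discrete-opfibration leg, which is precisely what causes the \(J\)-lifts to collapse into \(K\)-lifts and thereby forces the sync-minimal core of every commuting span to be discrete-opfibration-like.
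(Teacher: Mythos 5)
Your proof is correct, and its skeleton is the same as the paper's: specialise \cref{Proxy pullback split opfibration terminal} (a discrete opfibration is a split opfibration), get compatibility of every commuting span from \cref{Compatible square and discrete opfibration}, and then show that every such span is also independent, so that terminality amongst independent compatible spans becomes the full pullback universal property. Where you genuinely diverge is the independence step. The paper cites \cref{Independence and discrete opfibration}, whose proof verifies the auxiliary split-independence condition and then passes through the split-opfibration machinery (\cref{Independence and split opfibration}, which rests on \cref{Split independence and factorisation}) to recover independence. Your argument bypasses all of that: \GetPut{} for the discrete-opfibration leg turns every lift along \(J\) into a lift along \(K\) via the commuting square of put cofunctors, so by \PutId{}, \PutPut{} and \PutTgt{} every morphism \(c\) of the apex of \(\sync(K,J)\) with source \(E\) is a single lift, hence equals \(\lift{K}{E}Kc\) by \PutGet{}, and independence is immediate. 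This is more elementary, needs only commutativity (not compatibility) of the square, and in fact proves something slightly stronger, since agreement of the \(K\)-images alone already forces equality. What the paper's longer route buys is reuse of the split-opfibration results it must develop anyway for \cref{Proxy pullback split opfibration terminal}, and a lemma (\cref{Independence and discrete opfibration}) stated at the level of generality of split-independence arguments; your shortcut is specific to discrete opfibrations but is a clean, self-contained alternative.
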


This result was actually first proved by Chollet et al.~\cite{Clarke:2021:CategoryLens}, but in a nuts-and-bolts manner rather than as a consequence of the general theory that we present in this paper.

\begin{lemma}
\label{Independence and discrete opfibration}
Consider a compatible lens square
\[
\begin{tikzcd}
\D  \arrow[d, "\Gbar" swap]
    \arrow[r, "\Fbar"]&
\B  \arrow[d, "G"]\\
\A  \arrow[r, "F" swap]&
\C
\end{tikzcd}.
\]
If \(F\) or \(G\) is a discrete opfibration, then \((\Gbar, \Fbar)\) is independent.
\end{lemma}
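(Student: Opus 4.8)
The plan is to deduce independence from the equivalence already established in \cref{Independence and split opfibration}. Since a discrete opfibration is in particular a split opfibration, and since the notion of independence — like the sync-minimal core on which it is defined — is symmetric in the two legs of a span, we may assume without loss of generality that \(F\) is a discrete opfibration; it then suffices to verify that \((\Gbar, \Fbar)\) is \(\Fbar\)-split independent.

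The key observation is that when \(F\) is a discrete opfibration, the hypotheses appearing in the definition of \(\Fbar\)-split independence are far more restrictive than they look: they force the morphisms \(a_1\) and \(a_2\) to be identities, so that the condition holds trivially. Concretely, given data \(D_1, a_1, b, a_2\) as in the definition with \(\Fbar \lift{\Gbar}{D_1} a_1 = \id{\Fbar D_1}\), compatibility of the square rewrites the left-hand side as \(\lift{G}{\Fbar D_1} F a_1\); applying the get functor \(G\) and using \PutGet{} yields \(F a_1 = \id{F \Gbar D_1}\), and since \(F\) is a discrete opfibration this forces \(a_1 = \id{\Gbar D_1}\). The same argument gives \(a_2 = \id{}\). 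Then \PutId{} makes the lifts \(\lift{\Gbar}{D_1} a_1\) and \(\lift{\Gbar}{D_2} a_2\) identities, so the two candidate values of \(D_2'\) both coincide with \(\target \lift{\Fbar}{D_1} b\), and the square in \(\D\) that is required to commute has both of its nontrivial sides equal to \(\lift{\Fbar}{D_1} b\). Hence \((\Gbar, \Fbar)\) is \(\Fbar\)-split independent, and \cref{Independence and split opfibration} gives the conclusion.

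I do not anticipate a serious obstacle here: the only genuine idea is recognising that the discrete-opfibration hypothesis collapses the \(\Fbar\)-split independence condition, after which everything reduces to a short manipulation of the lens and compatibility axioms. The one point needing a little care is the symmetry reduction, so that the case where \(G\) rather than \(F\) is the discrete opfibration is genuinely covered, via the mirror-image form of \cref{Independence and split opfibration} for \(\Gbar\)-split independence.
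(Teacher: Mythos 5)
Your proposal is correct and matches the paper's own argument: the paper likewise assumes without loss of generality that \(F\) is a discrete opfibration, uses compatibility together with \PutGet{} to deduce \(Fa_1 = \id{FA_1}\) and \(Fa_2 = \id{FA_2}\), hence \(a_1\) and \(a_2\) are identities, so the \(\Fbar\)-split-independence condition holds trivially and \cref{Independence and split opfibration} yields independence. The only cosmetic difference is that you make the appeal to \cref{Independence and split opfibration} (and its mirror form) explicit, whereas the paper leaves it implicit.
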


\begin{proof}
Without loss of generality, suppose that \(F\) is a discrete opfibration. Let \(D_1 \in \objectSet{\D}\), let \(a_1 \colon \Gbar D_1 = A_1 \to A_1'\) in \(\A\), let \(b \colon \Fbar D_1 = B_2 \to B_2\) in \(\B\), and let \(a_2 \colon \target \Gbar \lift{\Fbar}{D_1}b = A_2 \to A_2'\), as shown in the diagram \cref{Equation: Simple Independence}. Suppose also that the square in \(\A\) commutes, and that \(\Fbar \lift{\Gbar}{D_1}a_1 = \id{B_1}\) and \(\Fbar \lift{\Gbar}{D_2}a_2 = \id{B_2}\). We have
\[Fa_1 = G\lift{G}{B_1}Fa_1 = G \Fbar \lift{\Gbar}{D_1} a_1 = G\id{B_1} = \id{GB_1} = \id{FA_1}\]
by compatibility of the lens square, and so \(a_1 = \id{A_1}\) as \(F\) is a discrete opfibration. Hence \(\lift{\Gbar}{D_1}{a_1} = \lift{\Gbar}{D_1}{\id{A_1}} = \id{D_1}\) and \(D_1' = D_1\). Similarly, \(\lift{\Gbar}{D_2}a_2 = \id{D_2}\) and \(D_2' = D_2\). As \(D_1' = D_1\), we have \(\lift{\Fbar}{D_1}b = \lift{\Fbar}{D_1'} b\). Hence the square in \(\D\) commutes.
\end{proof}

\begin{proof}[Proof of \cref{Proxy pullback of discrete opfibration is pullback}.]
By \cref{Proxy pullback split opfibration terminal}, a proxy-pullback span of a lens cospan with one leg a discrete opfibration is terminal amongst the independent lens spans that are compatible with the cospan. Every lens span that forms a commuting square with such a cospan is actually compatible with the cospan by \cref{Compatible square and discrete opfibration} and independent by \cref{Independence and discrete opfibration}.
\end{proof}

Specialising further, we now consider the proxy pullbacks of those cospans whose apex is the terminal category, that is, proxy products. Recall that the unique lens from a category \(\C\) to the terminal category is a discrete opfibration if and only if \(\C\) is a discrete category. The specialisation of \cref{Proxy pullback of discrete opfibration is pullback} then says that the proxy product of a category with a discrete category is a real product. Actually, in this case, the converse also holds.

\begin{proposition}
\label{Proxy product of discrete categories}
The proxy product of two categories is a real product if and only if at least one of the two categories is a discrete category.
\end{proposition}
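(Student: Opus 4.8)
The plan is to establish the two directions separately; the \emph{if} direction is immediate, while the \emph{only if} direction carries the content. For the \emph{if} direction, suppose without loss of generality that \(\A\) is discrete. Then the unique lens \(\A \to \terminalCat\) is a discrete opfibration, and since the proxy product of \(\A\) and \(\B\) is exactly a proxy pullback of the cospan \(\A \to \terminalCat \leftarrow \B\), it is a real pullback—hence a real product—by \cref{Proxy pullback of discrete opfibration is pullback}.

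For the \emph{only if} direction I would argue contrapositively: assuming neither \(\A\) nor \(\B\) is discrete, I would produce a lens span from \(\A\) to \(\B\) witnessing that the proxy product fails to be a real product. Two preliminary observations cut the problem down. First, the proxy product \(\A \times \B\) is always sync minimal, since each morphism \((a,b) \colon (A,B) \to (A',B')\) factors as \(\lift{\Fbar}{(A',B)}b \compose \lift{\Gbar}{(A,B)}a\), a composite of a lift along the left leg and a lift along the right leg. Second, as \(\terminalCat\) is terminal in \(\Lens\), every lens span \(\A \xleftarrow{\Gbar} \D \xrightarrow{\Fbar} \B\) forms a commuting square with the cospan \(\A \to \terminalCat \leftarrow \B\), and—using \PutId{} for the unique cofunctors into \(\terminalCat\)—the compatibility equations for this square reduce to \(\Fbar \lift{\Gbar}{D}a = \id{\Fbar D}\) and \(\Gbar \lift{\Fbar}{D}b = \id{\Gbar D}\), i.e.\ that lifting along either leg fixes the other component. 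Feeding these into the characterisation assembled from the results of \cref{Section: necessity,Section: semiuniversal properties}—a proxy pullback is a real pullback exactly when it is sync minimal and every commuting-square span is both compatible with the cospan and independent—yields that the proxy product is a real product if and only if \emph{every} lens span \(\A \xleftarrow{\Gbar} \D \xrightarrow{\Fbar} \B\) satisfies those two fixed-component conditions and is independent. So it suffices, given non-identity morphisms \(a \colon A_1 \to A_2\) of \(\A\) and \(b \colon B_1 \to B_2\) of \(\B\), to exhibit a single lens span failing one of these properties.

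To build such a span I would modify the proxy product \(\A \times \B\) in a controlled way, adjoining data that forces some lift along one leg to carry the other component nontrivially—along \(b\), respectively \(a\)—with the adjoined morphism lying in the sync-minimal core of the modified span. If the forced lift is a lift of \(a\) along the left leg, the span violates \(\Fbar \lift{\Gbar}{D}a = \id{\Fbar D}\); otherwise it stays compatible, but its sync-minimal core then contains two distinct morphisms with the same source and the same images under \(\Gbar\) and under \(\Fbar\), so it fails independence. In either case the characterisation forbids a comparison lens to the proxy product, so the proxy product is not a real product. The main obstacle is verifying that such a modification can always be realised as a genuine lens span: the \PutPut{} axiom interacts delicately with how \(a\) decomposes in \(\A\) (and \(b\) in \(\B\)), so finding a single uniform construction—or isolating the handful of cases it requires—is the real work of this direction. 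A useful simplification is that \(\A \to \terminalCat\) is a split opfibration, so by \cref{Independence and split opfibration} the independence of such spans may be checked via the more tractable notion of \(\Fbar\)-split independence.
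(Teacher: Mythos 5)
Your \emph{if} direction is exactly the paper's: it is the specialisation of \cref{Proxy pullback of discrete opfibration is pullback} to cospans over \(\terminalCat\). Your reduction of the \emph{only if} direction is also sound in spirit—the proxy product is indeed sync minimal, compatibility with the cospan over \(\terminalCat\) does reduce to the lifts along each leg fixing the other component, and since independence (and compatibility) are necessary for the existence of a comparison lens (\cref{Proxy pullback is independent,Independence and lens span morphisms,Compatibility preserved by precomposition}), it suffices to exhibit, for non-discrete \(\A\) and \(\B\), one lens span from \(\A\) to \(\B\) failing one of these properties.

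The genuine gap is that you never produce that span: you describe ``adjoining data'' to the proxy product and yourself flag the verification of the lens axioms as ``the real work of this direction,'' which is precisely the step left undone. The paper closes it with a single uniform construction, the free product (funny tensor product) \(\A \freeProduct \B\): objects are pairs \((A,B)\), morphisms are freely generated by \((a,B)\) and \((A,b)\) subject only to the identity and composition relations within each coordinate, and the projections carry lens structures with \(\lift{P_1}{(A,B)}a = (a,B)\) and \(\lift{P_2}{(A,B)}b = (A,b)\). Because there are no relations mixing the two kinds of generators, the \PutPut{} concern you raise simply does not arise—the lift of a composite in \(\A\) is the corresponding composite of generators, and \PutGet{} is immediate. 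Note also that this span is \emph{compatible} with the cospan (each lift fixes the other coordinate), so the failure is of independence, not compatibility: given non-identity \(a \colon A_1 \to A_2\) and \(b \colon B_1 \to B_2\), the two interleavings \((A_2,b) \compose (a,B_1)\) and \((a,B_2) \compose (A_1,b)\) are distinct morphisms of \(\A \freeProduct \B\) (no relation identifies them) with the same source, both composites of lifts (hence in the sync-minimal core), and both sent by \(P_1\) to \(a\) and by \(P_2\) to \(b\). Hence the span is not independent, no comparison lens to the proxy product exists, and the proxy product is not a real product. Your suggested detour through \(\Fbar\)-split independence via \cref{Independence and split opfibration} is unnecessary once this explicit witness is in hand.
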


To prove the converse, it suffices to show, for all non-discrete categories \(\A\) and \(\B\), that there is a non-independent lens span from \(\A\) to \(\B\). We may explicitly describe such a non-independent lens span; it is merely the so-called \textit{funny tensor product} \(\A \freeProduct \B\) of \(\A\) and \(\B\) with a canonical lens structure on the projection functors. Henceforth, we will refer to the funny tensor product as the \textit{free product} of categories, as it generalises the well-known free product of groups. The free product of categories has several different descriptions; we will use the following one.

\begin{definition}
The \textit{free product} \(\A \freeProduct \B\) of categories \(\A\) and \(\B\) is the category with object set \(\objectSet{\A} \times \objectSet{\B}\) whose morphisms are freely generated by those of the form
\[(A_1,B) \xrightarrow{(a, B)} (A_2, B) \qquad\text{and}\qquad (A,B_1) \xrightarrow{(A, b)} (A, B_2),\]
subject to the equations
\begin{align*}
    (\id{A}, B) &= \id{(A,B)} &
    (a_2, B) \compose (a_1, B) &= (a_2 \compose a_1, B)\\
    (A, \id{B}) &= \id{(A,B)} &
    (A, b_2) \compose (A, b_1) &= (A, b_2 \compose b_1).
\end{align*}
There are projection lenses \(P_1 \colon \A \freeProduct \B \to \A\) and \(P_2 \colon \A \freeProduct \B \to \B\), defined by the equations
\begin{align*}
    P_1 (A, B) &= A & P_1 (a, B) &= a & P_1 (A, b) &= \id{A} & \lift{P_1}{(A, B)}a = (a, B)\\
    P_2 (A, B) &= B & P_2 (A, b) &= b & P_2 (a, B) &= \id{B} & \lift{P_2}{(A, B)}b = (A, b),
\end{align*}
whose get functors are the usual projection functors.
\end{definition}

\begin{proof}[Proof of \cref{Proxy product of discrete categories}.]
The \textit{if} direction is a particular case of \cref{Proxy pullback of discrete opfibration is pullback}. For the \textit{only if} direction, suppose that \(\A\) and \(\B\) are both non-discrete categories, that is, that there are non-identity morphisms \(a \colon A_1 \to A_2\) in \(\A\) and \(b \colon B_1 \to B_2\) in \(\B\). Then the morphisms
\[(A_1, B_1) \xrightarrow{(a, B_1)} (A_2, B_1) \xrightarrow{(A_2, b)} (A_2, B_2) \qquad \text{and} \qquad (A_1, B_1) \xrightarrow{(A_1, b)} (A_1, B_2) \xrightarrow{(a, B_2)} (A_2, B_2)\]
in \(\A \freeProduct \B\) both have the same source object, and both are mapped by \(P_1\) to \(a\) and \(P_2\) to \(b\), but they are not equal. Hence the lens span \(\A \xleftarrow{P_1} \A \freeProduct \B \xrightarrow{P_2} \B\) is not independent.
\end{proof}

\section{Conclusion}
\label{Chapter: Conclusion}

In this paper, we gave necessary and sufficient conditions for when a lens span that forms a commuting square with a lens cospan has a comparison lens to a proxy pullback of the cospan. These conditions involved the new notions of compatibility, sync minimality and independence. They enabled us to describe exactly when a proxy pullback is a real pullback, and this description simplified further for proxy products. A search for such a simplified description for general proxy pullbacks is ongoing.

We would like to obtain categorical characterisations of the notions of sync minimality and independence, perhaps in terms of some universal property. Whilst the author is yet to discover a compelling such characterisation of independent lens spans, some interesting progress has already been made for sync minimal ones. The key observation is that being sync minimal is really a property of the put cofunctors of a lens span, and that the process of taking the sync minimal core actually gives a factorisation of this span of put cofunctors. Spivak and Niu~\cite{spivak:2021:polynomial-functors} show that \(\Cof\) has products; the explicit description of these products is unfortunately rather complicated—the objects of the product of two categories are certain pairs of rooted infinite trees whose edges are morphisms from either category, and the morphisms out of such an object are the paths in either tree from its root. It turns out that a cofunctor span is sync minimal exactly when its product pairing in \(\Cof\) has surjective lifting functions. We will call a cofunctor with surjective lifting functions \textit{cofull} and one with injective lifting functions \textit{cofaithful}. There is a well-known factorisation system on \(\Cof\) whose left class is the bijective-on-objects cofunctors and whose right class is the discrete opfibrations~\cite{Clarke:2020:InternalLensesAsFunctorsAndCofunctors}, which, in this context, we might also call the cofull cofaithful cofunctors. The factorisation system on \(\Cof\) that we are actually interested in has as its left class the cofaithful bijective-on-objects cofunctors, and its right class the cofull cofunctors; this factorisation of the put cofunctor of a lens coincides with the other factorisation. If we factor the product pairing of a cofunctor span using this factorisation system, then the sync-minimal core of the cospan is obtained by composing the second factor with the appropriate product projection cofunctors.

We have already recalled that symmetric lenses between two categories correspond to the equivalence classes of a certain equivalence relation on asymmetric lens spans between the two categories~\cite{JohnsonRosebrugh:2015:SpansDeltaLenses}. Clarke, with a different definition of symmetric lens, constructed an adjoint triple\footnote{Clarke's functor \(\mathscr{M}\) is not to be confused with our \(\mathscr{M}\) that sends a lens span to its sync-minimal core.}
\[
\begin{tikzcd}[column sep=large]
\SymLens(\A,\B)
    \arrow[r, "\mathscr{L}", shift left=5]
    \arrow[r, upLeftAdjointTo, shift left=3]
    \arrow[r, upLeftAdjointTo, shift right=3]
    \arrow[r, "\mathscr{R}" swap, shift right=5]&
\SpanLens(\A,\B)
    \arrow[l, "\mathscr{M}" description]
\end{tikzcd}
\]
between his category \(\SymLens(\A,\B)\) of symmetric lenses from \(\A\) to \(\B\) and the category \(\SpanLens(\A,\B)\) whose objects are lens spans from \(\A\) to \(\B\) and whose morphisms are functors satisfying certain compatibility conditions~\cite{Clarke:2021:ADiagrammaticApproachToSymmetricLenses}. The comonad \(\mathscr{L} \compose \mathscr{M}\) on \(\SpanLens(\A,\B)\) induced by the adjoint triple appears to be closely related to our process that sends a lens span to its sync minimal core. Additionally, as \(\mathscr{L}\) is fully faithful, we may think of those lens spans in the image of \(\mathscr{L}\) as representing symmetric lenses. It might thus be reasonable to think of the sync-minimal lens spans as being the symmetric lenses, an idea that is reinforced by the interpretation of the sync-minimal property that was given in \cref{Section: Sync-minimal spans}.

The original proposal for the \textit{Categories of Maintainable Relations} project of the Applied Category Theory Adjoint School 2020, which did not end up being the actual focus of the project, was to work out how to view symmetric lenses as some kind of generalised relations in~\(\Lens\). A \textit{relation} in a category from object \(X\) to object \(Y\) is usually defined as a jointly monic span from \(X\) to \(Y\). A \textit{regular category}~\cite{barrgrillet:1971:exactcategories} is a finitely complete category with a pullback-stable regular-epi mono factorisation system. Relations in regular categories are particularly nice as they form the morphisms of a bicategory; the composite of two relations is the image (from the factorisation system) of their composite as spans (from the pullback). Given a not-necessarily-proper orthogonal factorisation system \((\curlyscr{E}, \curlyscr{M})\) on a category with products, an \(\curlyscr{M}\)-relation from \(X\) to \(Y\) is a span from \(X\) to \(Y\) whose product pairing is in \(\curlyscr{M}\). If the factorisation system is pullback-stable, then the \(\curlyscr{M}\)-relations still form the morphisms of a bicategory with nice properties~\cite{meisen:1974:bicategories-relations-and-pullback,kelly:1991:note-relations-relative-factorization,pavlovic:1995:maps-relative-factorisation-system}, where composition of \(\curlyscr{M}\)-relations is defined similarly to that of relations in a regular category. As \(\Cof\) is finitely complete, we may consider the \(\curlyscr{M}\)-relations in \(\Cof\) for the factorisation system where \(\curlyscr{E}\) is the class of cofaithful bijective-on-objects cofunctors and \(\curlyscr{M}\) is the class of cofull cofunctors. From our earlier discussion, these \(\curlyscr{M}\)-relations are exactly the sync-minimal cofunctor spans. It would be interesting to work out what the composition of such \(\curlyscr{M}\)-relations is, as the pullback in \(\Cof\) is very different to the proxy pullback in \(\Lens\). Returning to the question of whether symmetric lenses may be viewed as some kind of relations in~\(\Lens\), we seem to need a further generalisation of the notion of internal relation as the sync-minimal core of a lens span is not obtained from a factorisation system on \(\Lens\) itself.

\section*{Acknowlegdements}

Many thanks go to Michael Johnson, Bryce Clarke, Richard Garner, Samuel Muller, Chris Heunen, Paolo Perrone and Perdita Stevens for their helpful feedback and suggestions.

\raggedright
\bibliographystyle{EPTCS/eptcs}
\bibliography{references}
\end{document}